\newtheorem{theorem}{Theorem}[section]
\newtheorem{corollary}[theorem]{Corollary}
\newtheorem{lemma}[theorem]{Lemma}
\newtheorem{proposition}[theorem]{Proposition}
\theoremstyle{definition}
\newtheorem{definition}[theorem]{Definition}
\newtheorem{problem}[theorem]{Problem}
\newtheorem*{conjecture}{Conjecture}
\numberwithin{equation}{section}
\theoremstyle{remark}
\newtheorem{remark}[theorem]{Remark}
\begin{document}

\title{A virtual character and nonzero Kronecker coefficients for self-conjugate partitions}
\pdfbookmark[0]{A virtual character and nonzero Kronecker coefficients for self-conjugate partitions}{}


\author{Xin Li}
\address{Department of Applied Mathematics, Zhejiang University of Technology, Hangzhou 310023, P. R. China}
\email{xinli1019@126.com}

\thanks{The research is supported by National Natural Science Foundation of China (Grant No.11626211).}

\subjclass[2010]{Primary 20C30; Secondary 05E15}
\keywords{Symmetric group character, Kronecker coefficient, Neighborhood, Hooklength diagram, Schur function, Tensor square}

\begin{abstract}
We generalize  Regev's result on a virtual character of $S_n$.
Suppose that $\lambda$ and $\mu$ are integer partitions of $n$. For the associated irreducible character $\chi^\lambda$ of $S_n$,  when $\chi^\lambda(\mu)\neq0$ we find another partition $\tau$ related to $\lambda$ such that $\chi^\tau(\mu)\neq0$ by the virtual character. Applying this result, we obtain a class of nonzero Kronecker coefficients by Pak et al.'s character criterion. Moreover, we discuss the effectiveness of Pak et al.'s character criterion by a concrete example.
\end{abstract}

\maketitle

\section{Introduction}\label{se:intro}
The Kronecker product problem is a problem of computing multiplicities in  the internal tensor product of two irreducible symmetric group representations. The multiplicity can be characterized by the corresponding characters:
$$g(\lambda,\mu,\nu) = \langle\chi^\lambda,\chi^\mu\otimes\chi^\nu\rangle.$$
It is often referred as `classic' and `one of the last major open problems' in algebraic combinatorics \cite{PPV}.  There are several related problems such as give an combinatoric explanation of  $g(\lambda,\mu,\nu)$ and determine when $g(\lambda,\mu,\nu)>0$.  The study of Kronecker coefficients have connection with other research fields such as quantum information theory \cite{Christ06,Christ07,Klyachko} and geometric complexity theory \cite{BlasiakMS}.

In \cite{Heide}, Heide, Saxl, Tiep and Zalesski proved that with a few known exceptions, every irreducible character of a simple group of Lie type is a constituent of the tensor square of the Steinberg character. In 2012, J. Saxl made the following conjecture \cite{PPV}:
\begin{conjecture}
Denote by $\rho_k = (k, k-1,... , 2, 1) \vdash n$, where $n ={k+1 \choose 2}$. Then for every $k\geq 1$, the tensor square $\chi^{\rho_k} \otimes \chi^{\rho_k}$ contains every irreducible character of $S_n$ as a constituent.
\end{conjecture}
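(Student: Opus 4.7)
The plan is to attack the conjecture through the Frobenius formula
\[
g(\rho_k, \rho_k, \nu) \;=\; \sum_{\mu \vdash n} z_\mu^{-1}\, \chi^{\rho_k}(\mu)^2\, \chi^\nu(\mu),
\]
in which every term carries the nonnegative weight $\chi^{\rho_k}(\mu)^2$. Combined with Pak--Panova--Vallejo's character criterion (a sufficient condition for $g(\lambda,\mu,\nu)>0$ based on the three characters not vanishing simultaneously on a class, with enough ``mass'' to beat cancellation), this reduces the conjecture to the global statement: for every $\nu \vdash n$, there exists $\mu \in \mathrm{supp}(\chi^{\rho_k}) := \{\mu : \chi^{\rho_k}(\mu)\neq 0\}$ with $\chi^\nu(\mu)\neq 0$ that contributes without being cancelled by the remaining summands.

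First I would establish a very large lower bound on $\mathrm{supp}(\chi^{\rho_k})$. The staircase $\rho_k$ is self-conjugate and has a simple hook-length diagram, so the Murnaghan--Nakayama rule often yields an essentially unique signed border-strip tiling, preventing cancellation. I would isolate structurally convenient families of cycle types (all odd parts, small maximal part, hook-like profiles, cycle types coming from $2$-quotients of $\rho_k$) and verify non-vanishing there. Regev's original virtual-character construction, and in particular its generalization produced in the present paper, already propagates non-vanishing from one partition to neighboring partitions; applied with $\lambda = \rho_k$ as the seed, it should substantially enlarge the certified part of $\mathrm{supp}(\chi^{\rho_k})$.

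Second, for each target $\nu$ I would use the same generalized virtual character, now applied on the ``$\nu$-side'': starting from a trivially controlled seed such as $\lambda_0 = (n)$ or $\lambda_0 = (1^n)$ (on which every character value is $\pm 1$, so any $\mu$ works), I iterate the neighborhood construction of the paper to transport non-vanishing at a fixed $\mu$ out to a combinatorial orbit of partitions $\tau$. The hope is that, by choosing $\mu$ inside $\mathrm{supp}(\chi^{\rho_k})$ and then varying $\mu$ over all certified classes, the union of these orbits exhausts all of $\{\nu : \nu \vdash n\}$. A supplementary layer would handle special shapes by direct formulas: hooks via the determinantal character formula, two-row $\nu$ via Rosas' formula, and near-rectangular $\nu$ via stability/semi-invariant techniques.

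The main obstacle — and the reason Saxl's conjecture is still open — is the gap between the \emph{local} non-vanishing produced one class at a time by the virtual character and the \emph{global} requirement that \emph{every} $\nu$ be hit. The hardest $\nu$ are precisely the self-conjugate ones near $\rho_k$ and those whose characters vanish on most classes; for these, the single class $\mu$ needed in $\mathrm{supp}(\chi^{\rho_k}) \cap \mathrm{supp}(\chi^\nu)$ may simply not exist among the classes we can certify. Overcoming this would require either a substantially refined character criterion that uses signed contributions from several classes jointly (rather than a single class), or a structural theorem showing that $\mathrm{supp}(\chi^{\rho_k})$ is large enough to meet the support of every $\chi^\nu$. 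Either of these would constitute a genuine breakthrough, and I would fully expect the proof attempt to stall at exactly this point — where the paper's result, non-vanishing for a family of $\nu$ obtained by the neighborhood construction, is a partial but real step.
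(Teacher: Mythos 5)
You should first note what the paper actually does with this statement: it is Saxl's conjecture, reproduced verbatim from the literature, and the paper offers \emph{no} proof of it --- it remains open. The paper's contribution is strictly partial progress: the generalized virtual character (Theorem \ref{thm:ssum}), the neighborhood transport result (Theorem \ref{thm:neib}), and the explicit constituents $\tau_{1,j}$ of $\chi^{\rho_m}\otimes\chi^{\rho_m}$ (Corollary \ref{cor:only1}, Theorem \ref{thm:rcnb}). So there is no ``paper's own proof'' for your attempt to match, and your proposal --- which you candidly describe as stalling --- is a research plan, not a proof. That honesty is appropriate, but the review must still flag where the plan provably breaks, not merely where it ``may'' stall.

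The concrete gap is in your second step, and it is fatal as stated. The Pak--Panova--Vallejo criterion is not a ``mass beats cancellation'' test over many classes: it is a single-class test, namely $\mu=\mu'$ self-conjugate and $\chi^\lambda(\hat\mu)\neq 0$ at the one class $\hat\mu$ of principal hook lengths implies $g(\lambda,\mu,\mu)>0$. Consequently, every partition $\nu$ your transport argument can certify necessarily satisfies $\chi^\nu(\hat\rho_k)\neq 0$: Theorem \ref{thm:neib} propagates non-vanishing \emph{at the fixed class} $\hat\rho_k$ (and only for hook lengths $r$ avoiding the parts $4k-1,4k-5,\ldots$ of $\hat\rho_k$), so the union of your orbits is contained in $\{\nu : \chi^\nu(\hat\rho_k)\neq 0\}$. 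But this set is a proper --- indeed asymptotically negligible --- subset of $P(n)$: the paper's remark after Corollary \ref{cor:neigb1} exhibits partitions comparable to $\rho_m$ in dominance order with $\chi^\tau(\hat\rho_m)=0$, Corollary \ref{cor:neigb1} shows most of the $1$-neighborhood of $\rho_m$ already vanishes on $\hat\rho_m$, and Section \ref{se:effectiveness} proves quantitatively that the fraction of constituents detectable by the character criterion tends to $0$ for the analogous self-conjugate family $\gamma=(k+1,2,1^{k-1})$. So no amount of iterating the neighborhood construction can exhaust all $\nu\vdash n$; the partitions whose characters vanish at $\hat\rho_k$ are unreachable by this method and would require genuinely different tools (Ikenmeyer's dominance-order criterion, Bessenrodt's spin-character approach, or the Luo--Sellke semigroup property), exactly as the paper's own remarks indicate. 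Your first step also overstates what is known: no lower bound on $\lvert\mathrm{supp}(\chi^{\rho_k})\rvert$ of the strength you would need is established in the paper or elsewhere.
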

Motivated by Saxl's conjecture, recently many results have been obtained \cite{Bessenrodt17,Ikenm,Luo,PPV}. In \cite{PPV}
Pak et al. showed that if $\mu=\mu'$ is self-conjugate and $\chi^\lambda(\hat{\mu})\neq0$, then $g(\lambda,\mu,\mu)>0$.
This criterion provides a connection between nonzero Kronecker coefficients and character values of the symmetric group.
Recently, Bessenrodt provided a very different approach and showed that results on the spin characters of a double cover $\tilde{S}_n$ of the symmetric group $S_n$ can be fruitfully applied towards Saxl's conjecture \cite{Bessenrodt17}. All these results illustrate that character theory is a powerful tool for understanding Kronecker coefficients.

Inspired by Pak et al.'s character criterion, we want to extend their discussions on nonzero irreducible character value and ask if $\chi^\lambda(\mu)\neq0$ implies another $\tau$ in some neighborhood (see Defination \ref{def:neib}) of $\lambda$ such that $\chi^\tau(\mu)\neq0$.  So if such $\tau$ exists we will get more nonzero Kronecker coefficients by the Main Lemma of \cite{PPV}.  A recursion formula to calculate the character of $S_n$ is the well-known Murnaghan-Nakayama rule. It can be rephrased by the so called `wrap operator' \cite[Thm. 21.7]{James78}. An integer-combination of irreducible characters is called a virtual character. In \cite{Regev}, Regev discussed a virtual character which is actually constructed by the `wrap operator' under some constraints. The construction there was called the `going around process'.  Regev gave an explicit expression of the virtual character's value.  Recently, Morotti estimated the the number of non-zero character values in generalized blocks of symmetric groups \cite{Morotti}. The proof there relied on a virtual character constructed by the `wrap operator'. In Theorem \ref{thm:ssum}, we generalize Regev's result on the virtual character constructed by the `going around process'. Through this and Morotti's method, we show that if $\chi^\lambda(\mu)\neq0$ then in the neighborhood constructed by the `going around process' there exists a class of different partitions whose character values on $\mu$ are nonzero. Implied by this and the properties of $\beta$-set \cite{Olsson}, we find partitions in the neighborhood of $\rho_k$ and obtain a class of nonzero Kronecker coefficients by Pak et al.'s criterion.

By the Frobenius's characteristic map \cite{sagan}, there is a bijection between irreducible characters and Schur functions.  Kronecker coefficients can also be given by the expression
$$s_{\mu}*s_{\nu}=\sum_{\lambda}g(\lambda,\mu,\nu)s_{\lambda},$$
where the Kronecker product of two Schur functions is decomposed through the combination of Schur basis \cite{rosas}. In Section 8 of \cite{PPV}, the authors discussed the effectiveness of their character criterion, that is, how many nonzero Kronecker coefficients can be detected by their main lemma.
In the second part of this paper, we continue their discussion for another self-conjugate partition. By a formula of Littlewood \cite{Lwood}, we find the nonzero coefficients in the decomposition of $s_{\gamma}*s_{\gamma}$ where $\gamma=(k+1,2,1^{k-1})$ ($k\in \mathbb{N}$).
Through this example we can see that the character criterion is less effectiveness for $s_{\gamma}*s_{\gamma}$. It also provides a new example of Kronecker coefficients in the condition of Durfee size 2.

The organization of the paper is as follows. In Section 2, we summarize basic definitions and results needed in this paper. We generalize Regev's result on a virtual character in Section 3. We show that if $\chi^\lambda(\mu)\neq0$ then there exists a class of $\tau$ in the `neighborhood' of  $\lambda$ such that $\chi^\tau(\mu)\neq0$. Implied by this result, we discuss how to find $\tau$ when $\lambda$ is of staircase shape and obtain a class of new nonzero Kronecker coefficients by Pak et al.'s criterion. In Section 4, we discuss the effectiveness of Pak et al.'s character criterion by an example.

\section{Preliminaries}\label{se:preli}
We let $S_n$ denote the symmetric group on $n$ letters. We assume the readers are familiar with the basic notations and results of representation theory of symmetric groups and related combinatorics. They can be found in \cite{James78,JKer81,sagan,stanley}.

If $A$ is a set, the cardinality of $A$ is denoted by $|A|$. A partition $\lambda$ of $n$, denoted $\lambda\vdash n$, is defined to be a weakly decreasing sequence $\lambda= (\lambda_1,\lambda_2,\ldots,\lambda_k)$ of non-negative integers such that the sum $\lambda_1+\lambda_2+\cdots+\lambda_k=n$. The set of all partitions of $n$ is denoted by $P(n)$. We do not distinguish between a partition $\lambda$ and its Young diagram. The Young diagram of a partition $\lambda$ is thought of as a collection of
boxes arranged using matrix coordinates. Denote by $\lambda'$ the conjugate partition of $\lambda$. Partition $\lambda$ is called self-conjugate if $\lambda=\lambda'$.
The Durfee size $d(\lambda)$ of a partition $\lambda  $ is $d$ if ($d^d$) is the largest square contained in its Young diagram.  For $P(n)$, the subset of partitions with Durfee size $k$ is denoted by $DS(k,n)=\{\lambda\in P(n)| d(\lambda)=k\}$.
For a box with position $(i,j)$ in the Young diagram of $\lambda$, denote the corresponding hook by $h_{i,j}$ and the hooklength by $|h_{i,j}|$.
For $\lambda\vdash n$ with $d(\lambda)=s$,  the principal hook partition of $\lambda$ is defined by $\hat{\lambda}=(|h_{1,1}|, . . . , |h_{s,s}|)$  where $h_{1,1}, . . . , h_{s,s}$ are the principal hooks in $\lambda$. Observe that $\hat{\lambda}\vdash n$.

The {\it hooklength diagram} of $\lambda$ is obtained by putting each box with the corresponding hooklength and denoted by $H(\lambda)$. For example, the hooklength diagram of $\lambda=(4,3,2,1)$ is:

\centerline{
\begin{ytableau}
7& 5 &  3 & 1\\
5& 3 & 1 \\
3& 1\\
1
\end{ytableau}}
\noindent where $|h_{1,1}|=7$,  $|h_{2,2}|=3$.

For partitions $\lambda$, $\mu$ with $|\lambda| = |\mu| = n$, let $\chi^\lambda$ be the associated irreducible character of $S_n$ and $\chi^\lambda_{\mu}$ or  $\chi^\lambda(\mu)$ the value $\chi^\lambda$ takes on the associated conjugacy class. For each $h\in H(\lambda)$, there corresponds a rim hook on the the boundary
of $\lambda$  by projecting it along diagonals. We let $\lambda\setminus h$ be the partition obtained by removing the rim hook corresponding to $h$.  Denote the leg length of $h$ by $\ell(h)$. Suppose $\lambda$, $\mu\in P(n)$ with $\mu=(\mu_1,\mu_2,\ldots,\mu_k)$. By the bijection between rim hooks and regular hooks, the Murnaghan-Nakayama rule (or `the M-N rule') \cite[Thm. 4.10.2]{sagan} says that
  \begin{equation*}
  \chi^\lambda_\mu=\sum_{h}(-1)^{\ell(h)}\chi^{\lambda\setminus h}_{\mu\setminus\mu_1},
  \end{equation*}
where the sum runs over all $h\in H(\lambda)$ with length $\mu_1$. If $\mu_1 \notin H(\lambda)$, we have $ \chi^\lambda_\mu=0$.

\section{The neighborhood of nonzero character value and Kronecker coefficient}\label{se:onnozero}
Let $n\geq r$, $\rho\vdash n-r$ and $\mu\vdash n$. Assume
$\rho\subseteq\mu$ and $S$ is a rim hook of $\mu$ with $\mu \setminus S = \rho$, then we write $\mu= \rho*S$. Let $\ell(S)$ denote the leg length of $S$.
For each $\rho\vdash n-r$, in the following we let
\begin{equation}\label{eq:psirho}
\psi_{\rho,n}=\sum_{S\in B(\rho,r)}(-1)^{\ell(S)}\chi^{\rho*S},
\end{equation}
where  $B(\rho,r)\subseteq P(n)$  is defined by
$$B(\rho,r)=\{\rho*S\mid~S~\text{are all possible rim hooks of length $r$ such that}~\rho*S\in P(n)\}.$$
So $\psi_{\rho,n}$ is a virtual character of $S_n$. Properties of $\psi_{\rho,n}$ have been studied in \cite[Thm. 3.1]{Regev} and \cite[Thm. 21.7]{James78}. In \cite{James78}, for $\mu\in P(n)$ James provided the condition when $\psi_{\rho,n}(\mu)=0$. However, the proof there is implicit. In \cite{Regev}, Regev discussed  the relation between $\psi_{\rho,n}$ and $\chi^\rho$ under the condition $n\geq 2(n-r)+2$.
In this section,  we generalize Regev's method and discuss the relation between $\psi_{\rho,n}$ and $\chi^\rho$ for general $n$ and $r$ (see Theorem \ref{thm:ssum} below). We will determine when $\chi^\lambda(\mu)\neq0$  implies another in the neighborhood (see Definition \ref{def:neib}) of $\lambda$. Combining with Pak's character criterion in \cite{PPV} we will find more nonzero Kronecker coefficients.

In order to illustrate Theorem \ref{thm:ssum}, the following notations for partitions will be used as convenient. Suppose that $\mu=(\mu_1,\mu_2,\ldots,\mu_l) \in P(n)$. Recall that there is a one-to-one correspondence between partitions in $P(n)$ and conjugacy classes of $S_n$, for example \cite{sagan}. So we can rewritten $\mu$ as $\mu=c_1c_2\cdots c_l$ where $c_i$ ($i=1,2,\ldots,l$) are cycles with length $\mu_i$.  In the following, for some cycle $c_{i}$ we denote $\mu=c_{i}\bar{\mu}_i$ where $\bar{\mu}_i\in P(n-\mu_i)$ is the partition corresponding to $c_1\cdots c_{i-1}c_{i+1}\cdots c_{l}$. The multiplicity of $r$-cycles in $\mu$ is the total number of cycles with length $r$ in $\{c_1,c_2,\cdots,c_l\}$ and denoted by $m_{r,\mu}$. The {\it cycle type} of $\mu$  is denoted by $C(\mu)=(1^{m_{1,\mu}},2^{m_{2,\mu}},\ldots,n^{m_{n,\mu}})$.  Without of confusion, we don't distinguish between $\mu$ and the corresponding conjugacy class.

Based on these notations, we have the following main theorem of this section which generalizes Theorem 3.1 in \cite{Regev}. It also gives a complement of Theorem 21.7 in \cite{James78}, which tells us the nonzero value of the generalized character there.

\begin{theorem}\label{thm:ssum}
For each $\mu\in P(n)$ and $\psi_{\rho,n}$ defined in (\ref{eq:psirho}), if $\mu=c\bar{\mu}$ where $c$ is a cycle of length $r$ and $\bar{\mu}\in P(n-r)$, then we have $\psi_{\rho,n}(\mu)=rm_{r,\mu}\cdot \chi^{\rho}(\bar{\mu})$ where $m_{r,\mu}$ is the multiplicity of $r$-cycles in $\mu$. Specially, if the cycle type of  $\mu$ contains no $r$-cycle (i.e. $m_{r,\mu}=0$), then $\psi_{\rho,n}(\mu)=0$.
\end{theorem}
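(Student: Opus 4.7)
The plan is to recognize $\psi_{\rho,n}$ via the Frobenius characteristic map and then perform a short power-sum calculation. Under $\mathrm{ch}$, the irreducible character $\chi^\lambda$ corresponds to the Schur function $s_\lambda$, and the symmetric-function form of the Murnaghan--Nakayama rule reads
\[
p_r\cdot s_\rho \;=\; \sum_{S}(-1)^{\ell(S)}\,s_{\rho*S},
\]
where $S$ ranges over $r$-rim hooks addable to $\rho$, i.e. over the partitions $\rho*S\in B(\rho,r)$. Consequently $\mathrm{ch}(\psi_{\rho,n})=p_r\,s_\rho$. This is the one non-trivial input; everything else is a calculation.

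Next I expand $s_\rho$ in the power-sum basis by the character formula $s_\rho=\sum_{\nu\vdash n-r}z_\nu^{-1}\chi^\rho(\nu)\,p_\nu$, which yields
\[
p_r\,s_\rho \;=\; \sum_{\nu\vdash n-r} z_\nu^{-1}\chi^\rho(\nu)\,p_{\nu\cup(r)},
\]
where $\nu\cup(r)$ denotes the partition of $n$ obtained by adjoining a part of size $r$. Reading off the coefficient of $p_\mu$: it is nonzero only if some $\nu\vdash n-r$ satisfies $\nu\cup(r)=\mu$, which happens precisely when $m_{r,\mu}\ge1$, and in that case $\nu=\bar{\mu}$ is the unique such $\nu$, giving the coefficient $z_{\bar{\mu}}^{-1}\chi^\rho(\bar{\mu})$. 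When $m_{r,\mu}=0$ the coefficient is $0$, which is exactly the ``Specially'' clause.

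To finish I apply the inversion $\psi_{\rho,n}(\mu)=z_\mu\cdot[p_\mu]\,\mathrm{ch}(\psi_{\rho,n})$, obtaining $\psi_{\rho,n}(\mu)=(z_\mu/z_{\bar{\mu}})\,\chi^\rho(\bar{\mu})$. Using $z_\lambda=\prod_i i^{m_{i,\lambda}}m_{i,\lambda}!$ and the fact that $\bar{\mu}$ agrees with $\mu$ in every multiplicity except $m_{r,\bar{\mu}}=m_{r,\mu}-1$, the ratio telescopes to
\[
\frac{z_\mu}{z_{\bar{\mu}}}\;=\;\frac{r^{m_{r,\mu}}\,m_{r,\mu}!}{r^{m_{r,\mu}-1}\,(m_{r,\mu}-1)!}\;=\;r\,m_{r,\mu},
\]
yielding the claimed formula $\psi_{\rho,n}(\mu)=r\,m_{r,\mu}\,\chi^\rho(\bar{\mu})$.

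There is no serious obstacle in this route; the content is all packaged in the symmetric-function identity $\mathrm{ch}(\psi_{\rho,n})=p_r\,s_\rho$. A more directly combinatorial proof, closer in spirit to Regev and Morotti, would instead apply Murnaghan--Nakayama to each $\chi^{\rho*S}(\mu)$ by stripping an $r$-rim hook $\tau$ from $\rho*S$, and then regroup the double sum by the resulting partition $\nu=(\rho*S)\setminus\tau$. The pairs with $\tau=S$ give back $|B(\rho,r)|\,\chi^\rho(\bar{\mu})$, but the remaining $(S,\tau)$ pairs do \emph{not} cancel locally and must be handled by an abacus/bead-swap involution together with a bookkeeping of leg lengths in order to recover the coefficient $r\,m_{r,\mu}$. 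That sign-and-count analysis would be the only delicate step; the Frobenius route sidesteps it completely.
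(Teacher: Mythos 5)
Your proof is correct, and it takes a genuinely different route from the paper. You package the whole statement into the Frobenius characteristic map: the identity $\mathrm{ch}(\psi_{\rho,n})=p_r s_\rho$ (the symmetric-function form of the Murnaghan--Nakayama rule, a standard stand-alone fact about Schur functions, so no circularity), followed by the power-sum expansion $s_\rho=\sum_\nu z_\nu^{-1}\chi^\rho(\nu)p_\nu$, extraction of the coefficient of $p_\mu$, and the ratio $z_\mu/z_{\bar\mu}=r\,m_{r,\mu}$; the vanishing when $m_{r,\mu}=0$ falls out of the same coefficient count. The paper instead stays entirely inside character theory: it first proves $\sum_{\lambda\vdash n}\chi^\lambda_{c\nu}\chi^\lambda=\sum_{\rho\vdash n-r}\chi^\rho_\nu\,\psi_{\rho,n}$ via the M--N rule, then evaluates both sides using the second orthogonality relation and the centralizer order $|Z_{S_n}(c\nu)|=|Z_{S_{n-r}}(\nu)|\cdot r\,m_{r,\mu}$, and finally solves for all values $\psi_{\rho,n}(\mu)$ simultaneously through the matrix identity $KM=KK^{T}D_r$ and the invertibility of the character table $K$ of $S_{n-r}$. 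The two arguments are morally dual --- the inversion formula for $\mathrm{ch}$ encodes exactly the orthogonality the paper invokes --- but yours treats each $\rho$ individually and is shorter once the identity $\mathrm{ch}(\psi_{\rho,n})=p_r s_\rho$ is quoted (e.g.\ from Stanley or Macdonald), whereas the paper's linear-algebra argument is self-contained at the level of the character-theoretic M--N rule, avoids symmetric functions, and determines the whole matrix $[\psi_{\rho_i,n}(\mu_j)]$ at once. Your closing remark is also accurate: the naive ``strip a rim hook from each $\rho*S$ and regroup'' approach would require a sign-cancellation/involution analysis that both your route and the paper's sidestep.
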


\begin{remark}
Suppose that $n\geq 2k+2$, $r=\mu_1=n-k$. Then in Theorem \ref{thm:ssum} above we have $m_{n-k,\mu}=1$ and we get Theorem 3.1 of \cite{Regev}.
\end{remark}

The proof is given below. In the following we assume $|P(n-r)|=d$. Then after putting an order on $P(n-r)$ we denote $P(n-r)=\{\rho_1,\rho_2,\ldots,\rho_d\}$.

\begin{proposition}\label{prp:suml}
Suppose $\nu\vdash n-r$ and $c$ is a cycle of length $r$. Then
$$\sum_{\lambda\vdash n}\chi^{\lambda}_{c\nu}\chi^{\lambda}=
\sum_{\rho\vdash n-r}\chi^{\rho}_{\nu}\psi_{\rho,n}.$$
\end{proposition}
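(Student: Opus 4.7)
The plan is to prove the identity by expanding the right-hand side via the definition of $\psi_{\rho,n}$ in (\ref{eq:psirho}), reindexing the resulting double sum, and then recognizing the inner sum as an instance of the Murnaghan--Nakayama rule.

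First I would substitute the defining formula for $\psi_{\rho,n}$ into the right-hand side to obtain
$$\sum_{\rho\vdash n-r}\sum_{S\in B(\rho,r)}(-1)^{\ell(S)}\chi^{\rho}_{\nu}\,\chi^{\rho*S}.$$
Next I would reindex the double sum by the pair $(\lambda,h)$, where $\lambda=\rho*S\vdash n$ and $h=S$ is the rim hook of length $r$ in $\lambda$ whose removal yields $\rho$. This reindexing is a bijection: every $\lambda\vdash n$ together with a length-$r$ rim hook $h\in H(\lambda)$ produces a unique pair $(\rho,S)=(\lambda\setminus h, h)$ with $\rho\vdash n-r$ and $S\in B(\rho,r)$, and conversely. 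After switching the summation order the right-hand side becomes
$$\sum_{\lambda\vdash n}\chi^{\lambda}\sum_{\substack{h\in H(\lambda)\\ |h|=r}}(-1)^{\ell(h)}\chi^{\lambda\setminus h}_{\nu}.$$

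The crux is to identify the inner sum as $\chi^{\lambda}_{c\nu}$. Although the Murnaghan--Nakayama rule quoted in Section \ref{se:preli} peels off the first part $\mu_1$, the character value depends only on the cycle type of the conjugacy class, so the rule applies equally when we choose to peel off the designated $r$-cycle $c$ from $c\nu$ first. This is exactly what the inner sum computes, giving $\chi^{\lambda}_{c\nu}$, and substituting back recovers the left-hand side.

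The only subtlety, rather than a genuine obstacle, is making the reindexing step watertight: one must verify that $B(\rho,r)$ --- defined so that $\rho*S\in P(n)$ --- corresponds precisely to the set of rim hooks of $\lambda$ of length $r$ indexed in the Murnaghan--Nakayama rule, with matching leg lengths. Once this bookkeeping is in place, the proof is a clean rearrangement plus a single application of the M--N rule, with no further combinatorial input required.
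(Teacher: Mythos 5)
Your proposal is correct and is essentially the paper's own argument read in the opposite direction: the paper starts from the left-hand side, expands each $\chi^{\lambda}_{c\nu}$ by the Murnaghan--Nakayama rule applied to the $r$-cycle $c$, and regroups the terms by $\rho=\lambda\setminus S$, which is exactly your reindexing of pairs $(\rho,S)\leftrightarrow(\lambda,h)$. Both rely on the same bijection and on applying the M--N rule to a cycle that need not be the largest part, so there is no substantive difference.
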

\begin{proof}
By the M-N rule we have,

$\begin{aligned}\label{eq-slam}
\chi^{\lambda}_{c\nu}\chi^{\lambda}
&=(a_1^{\lambda}\chi^{\rho_1}_{\nu}
+a_2^{\lambda}\chi^{\rho_2}_{\nu}+\cdots+a_d^{\lambda}\chi^{\rho_d}_{\nu})
\chi^{\lambda} \\
&=a_1^{\lambda}\chi^{\rho_1}_{\nu}\chi^{\lambda}
+a_2^{\lambda}\chi^{\rho_2}_{\nu}\chi^{\lambda}+\cdots+
a_d^{\lambda}\chi^{\rho_d}_{\nu}\chi^{\lambda}\\
&=a_1^{\lambda}\chi^{\rho_1}_{\nu}\chi^{\rho_1*S_1^{\lambda}}
+a_2^{\lambda}\chi^{\rho_2}_{\nu}\chi^{\rho_2*S_2^{\lambda}}+\cdots+
a_d^{\lambda}\chi^{\rho_d}_{\nu}\chi^{\rho_d*S_d^{\lambda}},
\end{aligned}$\\
where $a_i^{\lambda}=(-1)^{\ell(S_i^{\lambda})}$ if there exists a rim hook $S_i^{\lambda}$  of length $r$ such that $\lambda=\rho_i*S_i^{\lambda}$, otherwise,
 $a_i^{\lambda}=0$ for $i=1,2,\ldots,d$.

So if we take sum for all $\lambda\in P(n)$ over both side of equations above, we get the desired result.
\end{proof}

\begin{lemma}(The second orthogonality relation \cite{Isaacs})\label{le:tco}
\begin{equation*}
\sum_{\lambda\vdash n}\chi^{\lambda}_{\mu}\chi^{\lambda}_{\nu}=\left\{ \begin{aligned}
         |Z_{S_n}(\mu)|&\qquad\text{if}\quad\mu=\nu\\
         0 &\qquad\text{if}\quad\mu\neq\nu
                          \end{aligned} \right.,
                          \end{equation*}
where $\lambda,\mu,\nu\in P(n)$ and $Z_{S_n}(\mu)$ is the centralizer of $\mu$ in $S_n$.
\end{lemma}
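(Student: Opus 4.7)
The plan is to deduce this from the standard first orthogonality relation $\langle\chi^\lambda,\chi^\rho\rangle=\delta_{\lambda\rho}$ (taken as known, e.g.\ from \cite{Isaacs}) by a matrix-inversion argument. Characters of $S_n$ are integer-valued, so no complex conjugation is needed; writing $C_\sigma$ for the conjugacy class of cycle type $\sigma\vdash n$, unpacking the inner product as a sum over conjugacy classes gives
\[
\delta_{\lambda\rho}\;=\;\frac{1}{|S_n|}\sum_{\sigma\vdash n}|C_\sigma|\,\chi^\lambda(\sigma)\,\chi^\rho(\sigma).
\]

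The key structural input is that the character table of $S_n$ is a square matrix: both irreducible characters and conjugacy classes are indexed by $P(n)$, the latter via cycle type. Let $X=(\chi^\lambda(\sigma))_{\lambda,\sigma\in P(n)}$ and let $D$ be the diagonal matrix with $D_{\sigma\sigma}=|C_\sigma|/|S_n|$. The first orthogonality relation then reads $XDX^T=I$. Squareness turns this one-sided inverse relation into a two-sided one, so $(DX^T)X=I$ also holds, which unpacked is exactly
\[
\sum_{\lambda\vdash n}\chi^\lambda(\mu)\,\chi^\lambda(\nu)\;=\;\frac{|S_n|}{|C_\mu|}\,\delta_{\mu\nu}.
\]
Finally, the orbit--stabilizer identity gives $|C_\mu|\cdot|Z_{S_n}(\mu)|=|S_n|$, hence $|S_n|/|C_\mu|=|Z_{S_n}(\mu)|$, yielding the two cases of the lemma (value $|Z_{S_n}(\mu)|$ when $\mu=\nu$, value $0$ otherwise).

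The only point needing justification beyond bookkeeping is the squareness of $X$, which is the general fact that the number of complex irreducibles of a finite group equals the number of its conjugacy classes, applied to $S_n$ where both counts equal $|P(n)|$ (the irreducibles coming from the Specht module construction and the classes from cycle type). I expect this squareness, together with the passage from $XDX^T=I$ to $DX^TX=I$, to be the only mild obstacle; all other steps are linear algebra and do not require any rim-hook or symmetric-function input, so this lemma is used as a purely character-theoretic black box in the sequel.
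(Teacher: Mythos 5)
Your proof is correct. Note, though, that the paper does not prove this lemma at all: it is quoted verbatim from Isaacs's book as a standard fact of character theory, so there is no internal argument to compare against. Your derivation is the classical one — encode the first orthogonality relation as $XDX^{T}=I$ for the square character table $X$ and the diagonal matrix $D$ of class sizes over $|S_n|$, use squareness to convert the right inverse $DX^{T}$ into a two-sided inverse so that $DX^{T}X=I$, and finish with orbit--stabilizer to identify $|S_n|/|C_\mu|$ with $|Z_{S_n}(\mu)|$. All the ingredients you flag as needing justification are indeed the only nontrivial ones, and both hold: the number of irreducible characters of a finite group equals the number of conjugacy classes (for $S_n$, both equal $|P(n)|$), and a one-sided inverse of a square matrix over a field is two-sided. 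Your remark that integer-valuedness of $S_n$-characters lets you drop the complex conjugation appearing in the general second orthogonality relation $\sum_\chi \chi(g)\overline{\chi(h)}$ is also accurate and worth keeping, since the lemma as stated in the paper is conjugation-free. In short: the paper uses the lemma as a black box with a citation, and you have supplied a complete, standard proof of exactly the form one would find in the cited reference; nothing is missing.
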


For the number of $Z_{S_n}(\mu)$, we have:
\begin{lemma}\label{lem:cenn}
Suppose $\mu \vdash n$ with cycle type $C(\mu)=(1^{m_{1,\mu}},2^{m_{2,\mu}},\ldots,n^{m_{n,\mu}})$. Then
$$|Z_{S_n}(\mu)|=\prod_{i=1}^{n}i^{m_{i,\mu}}m_{i,\mu}!.$$
\end{lemma}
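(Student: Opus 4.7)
The plan is to apply the orbit–stabilizer theorem to the conjugation action of $S_n$ on itself. Since the conjugacy class of $\mu$ is exactly the set of permutations of cycle type $C(\mu)$, it suffices to count the number of such permutations and divide $n!$ by that count, because the stabilizer of $\mu$ under conjugation is by definition $Z_{S_n}(\mu)$.

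Next, I would count permutations of cycle type $(1^{m_{1,\mu}},2^{m_{2,\mu}},\ldots,n^{m_{n,\mu}})$ by the standard ``slot-filling'' argument. Fix an ordered template with $m_{1,\mu}$ slots of length $1$, followed by $m_{2,\mu}$ slots of length $2$, and so on. Any ordering of $\{1,2,\ldots,n\}$, of which there are $n!$, fills the template and produces a permutation by reading each slot as a cycle. The key observation is that each permutation of the prescribed cycle type arises from exactly $\prod_{i=1}^{n} i^{m_{i,\mu}} m_{i,\mu}!$ fillings: for each $i$, any one of the $m_{i,\mu}$ cycles of length $i$ may be written starting from any of its $i$ entries (contributing $i^{m_{i,\mu}}$), and the $m_{i,\mu}$ cycles of length $i$ may be listed among their own slots in any order (contributing $m_{i,\mu}!$). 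Dividing gives the class size $n!/\prod_i i^{m_{i,\mu}} m_{i,\mu}!$, and orbit–stabilizer then yields the claimed formula.

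As a sanity check, and really as the ``direct'' proof, I would also note that the centralizer admits an explicit description: an element $\sigma \in S_n$ commutes with $\mu$ if and only if conjugation by $\sigma$ preserves the cycle decomposition of $\mu$. Such a $\sigma$ must permute the $m_{i,\mu}$ cycles of length $i$ among themselves (giving an $S_{m_{i,\mu}}$-worth of choices) and, on each cycle of length $i$, act by one of its $i$ rotations (giving a $\mathbb{Z}/i\mathbb{Z}$-worth of choices). These choices are independent across different $i$'s and across different cycles of the same length, and they do commute, so
\[
Z_{S_n}(\mu) \;\cong\; \prod_{i=1}^{n} \bigl(\mathbb{Z}/i\mathbb{Z}\bigr) \wr S_{m_{i,\mu}},
\]
which has order $\prod_{i=1}^{n} i^{m_{i,\mu}} m_{i,\mu}!$, matching the count above.

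There is essentially no obstacle here; this is a standard textbook fact whose only subtlety is the bookkeeping in the overcounting step. The main care needed is to ensure that the factor $i^{m_{i,\mu}}$ (rotations) and the factor $m_{i,\mu}!$ (reordering cycles of the same length) are counted independently without double-counting and that the argument handles the fixed points ($i=1$) correctly — there $i^{m_{1,\mu}} = 1$ contributes trivially and only $m_{1,\mu}!$ is relevant, which the formula reflects automatically.
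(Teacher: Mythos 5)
Your proof is correct. The paper itself gives no proof of this lemma at all --- it is quoted as a standard fact about centralizers in $S_n$ --- so there is no argument to compare against; both your orbit--stabilizer count of the conjugacy class size and your direct identification of $Z_{S_n}(\mu)$ with $\prod_{i}(\mathbb{Z}/i\mathbb{Z})\wr S_{m_{i,\mu}}$ are standard, correct justifications, and either one alone would suffice.
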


\begin{corollary}\label{cor:cenv}
Suppose that a conjugacy class in $S_n$ can be written as $c\nu$ where $c$ is a cycle of length $r$. Then $\nu\in P(n-r)$ and we have
\begin{equation*}
\sum_{\lambda\vdash n}\chi^{\lambda}_{\mu}\chi^{\lambda}_{c\nu}=\left\{ \begin{aligned}
         |Z_{S_{n-r}}(\nu)|\cdot rm_{r,\mu}&\qquad\text{if}\quad\mu=c\nu\\
         0 &\qquad\text{if}\quad\mu\neq c\nu
                          \end{aligned} \right.,
                          \end{equation*}
where $\mu\in P(n)$.
\end{corollary}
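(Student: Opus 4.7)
The plan is to deduce this corollary directly from the second orthogonality relation (Lemma \ref{le:tco}) combined with the explicit centralizer formula (Lemma \ref{lem:cenn}), so the whole argument is a short bookkeeping computation on cycle-type multiplicities.

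First I would apply Lemma \ref{le:tco} with the conjugacy class $c\nu \in P(n)$. This immediately gives the vanishing in the case $\mu \neq c\nu$, and reduces the case $\mu = c\nu$ to verifying the identity
\begin{equation*}
|Z_{S_n}(c\nu)| = |Z_{S_{n-r}}(\nu)| \cdot r\, m_{r,\mu}.
\end{equation*}

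Next I would compare cycle-type multiplicities. Since $c\nu$ is obtained from $\nu$ by adjoining one extra cycle of length $r$, we have $m_{i,c\nu} = m_{i,\nu}$ for $i \neq r$ and $m_{r,c\nu} = m_{r,\nu}+1$; moreover $m_{r,c\nu} = m_{r,\mu}$ because $\mu = c\nu$ as conjugacy classes. Plugging these into the formula of Lemma \ref{lem:cenn},
\begin{equation*}
|Z_{S_n}(c\nu)| = r^{m_{r,\nu}+1}(m_{r,\nu}+1)! \prod_{i\neq r} i^{m_{i,\nu}} m_{i,\nu}!,
\end{equation*}
and dividing by
\begin{equation*}
|Z_{S_{n-r}}(\nu)| = r^{m_{r,\nu}} m_{r,\nu}! \prod_{i\neq r} i^{m_{i,\nu}} m_{i,\nu}!
\end{equation*}
yields the factor $r(m_{r,\nu}+1) = r\, m_{r,c\nu} = r\, m_{r,\mu}$, finishing the identity.

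There isn't really a hard step here; the only thing to be careful about is the implicit identification between cycle-type partitions and conjugacy classes, in particular the fact that the condition ``$\mu = c\nu$'' in the statement must be read at the level of conjugacy classes (equivalently, $\mu$ is obtained from $\nu$ by prepending an $r$-cycle), which is exactly what makes the multiplicity $m_{r,\mu}$ appear in the answer rather than just a factor of $r$. Once that is unpacked, both cases of the piecewise formula fall out of Lemmas \ref{le:tco} and \ref{lem:cenn}.
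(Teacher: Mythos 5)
Your argument is correct and is essentially the paper's own proof: the paper likewise notes that the cycle type of $\nu$ is that of $\mu=c\nu$ with one $r$-cycle removed and then invokes Lemma \ref{le:tco} together with Lemma \ref{lem:cenn}; you have simply written out the centralizer ratio $|Z_{S_n}(c\nu)|/|Z_{S_{n-r}}(\nu)| = r\,m_{r,\mu}$ explicitly, which the paper leaves implicit.
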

\begin{proof}
If $\mu=c\nu$ for a cycle $c$ of length $r$ with cycle type $(1^{m_{1,\mu}},2^{m_{2,\mu}},\ldots,n^{m_{n,\mu}})$, then the cycle type of $\nu$ is $(1^{m_{1,\mu}},\ldots,r^{m_{r,\mu}-1},\ldots,n^{m_{n,\mu}})$. Hence, the proof is completed by Lemma \ref{le:tco} and \ref{lem:cenn}.
\end{proof}

For $\rho_i\in P(n-r)$ ($i=1,2,\ldots,d$), let $K=[\chi_{\rho_i}^{\rho_j}]_{i\times j}$ be the $d\times d$ matrix whose transpose can be viewed as the character table of $S_{n-r}$. Define a $d\times1$ column vector by $[\psi_{\rho_i,n}]$. Then by Proposition \ref{prp:suml} and Corollary \ref{cor:cenv}  we have,
\begin{equation}\label{eq:nozc}
[\chi_{\rho_i}^{\rho_j}][\psi_{\rho_i,n}(\mu)]=[\sum_{\lambda\vdash n}\chi^{\lambda}_{c\rho_i}\chi_{\mu}^{\lambda}]\\
=\left(
         \begin{array}{c}
            0 \\
            \vdots \\
            |Z_{S_{n-r}}(\rho_{i_0})|\\
            \vdots\\
            0 \\
          \end{array}
        \right)\cdot rm_{r,\mu},
\end{equation}
if $\mu=c\rho_{i_0}$ for some $i_0=1,2,\ldots,d$. Otherwise,
\begin{equation}\label{eq:zoc}
[\chi_{\rho_i}^{\rho_j}][\psi_{\rho_i,n}(\mu)]=\mathbf{0}.
\end{equation}

Next, we give the proof of Theorem \ref{thm:ssum}.
\begin{proof}
It is well known that $K=[\chi_{\rho_i}^{\rho_j}]$ is invertible \cite[Cor~6.5]{James78}. If  $\mu\neq c\rho_{i_0}$ for some $i_0=1,2,\ldots,d$, then by (\ref{eq:zoc}) we have
$$[\psi_{\rho_i,n}(\mu)]=\mathbf{0}.$$
Hence, we get the second part of the theorem.

In the following, we let $\mu_j=c \rho_j$ ($j=1,2,\ldots,d$) and the $d\times d$ matrix defined by
$$M=[\psi_{\rho_i,n}(\mu_j)]_{i\times j}=[\psi_{\rho_i,n}(c\rho_j)].$$
Then by (\ref{eq:nozc}) we have
\begin{equation}\label{eq:kmkd}
\begin{aligned}
KM=&[\chi_{\rho_i}^{\rho_j}][\psi_{\rho_i,n}(\mu_j)]=[\sum_{\lambda\vdash n}\chi^{\lambda}_{c\rho_i}\chi_{\mu_j}^{\lambda}]\\
=&diag \left(|Z_{S_{n-r}}(\rho_{1})|,\ldots,|Z_{S_{n-r}}(\rho_{d})|\right)\cdot
diag \left(rm_{r,\mu_1},rm_{r,\mu_2},\ldots, rm_{r,\mu_d} \right)\\
=&KK^{T}D_{r},
\end{aligned}
\end{equation}
where $D_{r}=diag \left(rm_{r,\mu_1},rm_{r,\mu_2},\ldots, rm_{r,\mu_d} \right)$.

Since $K$ is invertible, by (\ref{eq:kmkd}) we have $M=K^{T}D_r$. That is,
$$[\psi_{\rho_i,n}(\mu_j)]=[\chi_{\rho_j}^{\rho_i}]\cdot diag\left(rm_{r,\mu_1},rm_{r,\mu_2},\ldots, rm_{r,\mu_d} \right).$$
Comparing elements of matrix in both side, we have $\psi_{\rho_i,n}(\mu_j)= rm_{r,\mu_j}\cdot \chi_{\rho_j}^{\rho_i}$ for $i,j=1,2,\ldots,d$, which completes the first part of the theorem.
\end{proof}

In the following, we will use Theorem \ref{thm:ssum} to find more nonzero irreducible character values in the neighborhood (see Definition \ref{def:neib}) of a partition. Then by Pak's character criterion we will find more nonzero Kronecker coefficients.

\begin{lemma}\cite{Morotti}\label{lem:sgn}
Assume that $\gamma_1$, $\gamma_2$, $\delta_1$ and $\delta_2$ are partitions such that $\gamma_i$ can be obtained from $\delta_j$  by removing a hook of leg length $l_{i,j}$ for $1 \leq i, j \leq 2$. Further assume that $\gamma_1\neq\gamma_2$, $\delta_1\neq\delta_2$ and $\delta_1$ and $\delta_2$ cannot be obtained one from the other by removing a hook. Then $(-1)^{l_{1,1}+l_{1,2}+l_{2,1}+l_{2,2}}=-1$.
\end{lemma}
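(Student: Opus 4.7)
The plan is to work in the $\beta$-set (abacus) picture. After padding $\lambda$ to length $M$ with zeros, set $\beta(\lambda)=\{\lambda_i+M-i:1\le i\le M\}$. A hook removal then corresponds to moving a bead from an occupied position $b\in\beta(\lambda)$ to an empty position $b'<b$, with hook length $b-b'$ and leg length equal to $|\{c\in\beta(\lambda):b'<c<b\}|$. In this language the four hypothesized hook removals become four bead displacements $b_{i,j}\to b'_{i,j}$ satisfying $\beta(\gamma_i)=(\beta(\delta_j)\setminus\{b_{i,j}\})\cup\{b'_{i,j}\}$, and $l_{i,j}$ is the count of $\beta(\delta_j)$-beads strictly inside $(b'_{i,j},b_{i,j})$.

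First I would pin down the symmetric difference $\beta(\delta_1)\triangle\beta(\delta_2)$. Comparing the two identities for $\beta(\gamma_1)$ gives $\beta(\delta_1)\triangle\beta(\delta_2)\subseteq\{b_{1,1},b'_{1,1},b_{1,2},b'_{1,2}\}$, while the hypothesis that neither of $\delta_1,\delta_2$ is obtained from the other by a hook removal, together with $\delta_1\ne\delta_2$, forces this symmetric difference to have size exactly $4$. A short check of the membership relations then shows $\beta(\delta_1)\triangle\beta(\delta_2)=\{P,Q,R,S\}$ with $\{P,Q\}=\beta(\delta_1)\setminus\beta(\delta_2)$ and $\{R,S\}=\beta(\delta_2)\setminus\beta(\delta_1)$, and gives the identifications $P=b_{1,1}$, $R=b'_{1,1}$, $S=b_{1,2}$, $Q=b'_{1,2}$. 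Repeating the same analysis for $\gamma_2$ identifies $(b_{2,1},b'_{2,1},b_{2,2},b'_{2,2})$ with another matching of the same four-element set.

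The main step is a case analysis of this second matching. The hook-orientation constraint $b'_{2,j}<b_{2,j}$ combined with $\gamma_1\ne\gamma_2$ eliminates two of the four conceivable assignments and leaves two configurations: one in which $\gamma_1,\gamma_2$ share the source bead in $\beta(\delta_1)$ (and the target bead in $\beta(\delta_2)$), and one in which they share the target in $\beta(\delta_1)$ (and the source in $\beta(\delta_2)$). In each configuration the order of $P,Q,R,S$ along $\mathbb{Z}$ is determined up to a single swap of the two ``middle'' elements.

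Finally I would compute $l_{1,1}+l_{1,2}+l_{2,1}+l_{2,2}$ in each configuration by decomposing every $l_{i,j}=|\beta_0\cap(b'_{i,j},b_{i,j})|+\varepsilon_{i,j}$, where $\beta_0=\beta(\delta_1)\cap\beta(\delta_2)$ and $\varepsilon_{i,j}\in\{0,1\}$ records whether the second point of $\beta(\delta_j)\cap\{P,Q,R,S\}$ lies in the open interval. Subdividing each of the four intervals at the intermediate elements of $\{P,Q,R,S\}$, one checks that every maximal sub-interval is crossed by exactly two of the four legs, so all $\beta_0$-contributions cancel modulo~$2$, while the $\varepsilon_{i,j}$-terms always contribute a single $1$. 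Hence $l_{1,1}+l_{1,2}+l_{2,1}+l_{2,2}\equiv1\pmod 2$, which is exactly $(-1)^{l_{1,1}+l_{1,2}+l_{2,1}+l_{2,2}}=-1$. The main obstacle is the bookkeeping in this last step; a naive write-up reduplicates the count across each sub-configuration, so the cleanest presentation would isolate the key parity identity into a small symmetric lemma about sums $\sum_k|\beta_0\cap(a_k,b_k)|$ over four intervals whose endpoints are a permutation of $\{P,Q,R,S\}$.
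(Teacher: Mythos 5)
Your argument is correct: the reduction to $\beta$-sets, the forced size-$4$ symmetric difference $\{P,Q,R,S\}$, the elimination down to the two admissible matchings via $\gamma_1\neq\gamma_2$ and the orientation constraints $P>R$, $S>Q$, and the final parity count (each maximal sub-interval of $\beta_0$-beads is covered by exactly two of the four legs, and exactly one of the four exceptional terms contributes $1$) all check out in every ordering. Note that the paper itself offers no proof of this lemma — it is quoted from Morotti — and your write-up is essentially the standard $\beta$-set argument given there, so there is nothing in the paper to contrast it with.
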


\begin{definition}\label{def:neib}
 Suppose $\mu\in P(n)$ and $h\in H(\mu)$ with $|h|=r$. Let $\bar{\mu}_h=\mu \backslash h$.  Define the $|h|$-neighborhood of $\mu$ by
$$N(\mu,|h|)=\{\bar{\mu}_h*S\mid~S~\text{are all possible rim hooks of length $|h|=r$ such that}~\bar{\mu}_h*S\in P(n)\}.$$
\end{definition}

By method in \cite[Thm.3]{Morotti} and Theorem \ref{thm:ssum}, we have the following theorem:
\begin{theorem}\label{thm:neib}
If $\chi^{\mu}(\lambda)\neq0$ and $\lambda=(\lambda_1,\lambda_2,\ldots, \lambda_k)$, then for each $h\in H(\mu)$ with $|h|=r$ and $|h|\notin \{\lambda_1,\lambda_2,\ldots, \lambda_k\}$, there exists a $\tau\in N(\mu,|h|)$ such that $\chi^{\tau}(\lambda)\neq0$ and $\tau\neq\mu$. Moreover, if $h_1, h_2,\ldots,h_l\in H(\mu)$ with $|h_i|\notin \{\lambda_1,\lambda_2,\ldots, \lambda_k\}$ are pairwise different, then the corresponding $\tau_i$ ($i=1,2,\ldots,l$) are also different.
\end{theorem}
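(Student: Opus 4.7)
The plan begins by applying Theorem \ref{thm:ssum} with $\rho := \bar\mu_{h_i} = \mu \setminus h_i$ and $r := |h_i|$. Since by hypothesis $|h_i|$ is not a part of $\lambda$, we have $m_{|h_i|,\lambda} = 0$, so the second clause of Theorem \ref{thm:ssum} yields $\psi_{\bar\mu_{h_i},n}(\lambda) = 0$. Unpacking (\ref{eq:psirho}), this reads
\[
\sum_{\tau \in N(\mu, |h_i|)} (-1)^{\ell(S_\tau^i)}\, \chi^\tau(\lambda) \;=\; 0,
\]
where $S_\tau^i$ denotes the rim hook of length $|h_i|$ with $\bar\mu_{h_i} * S_\tau^i = \tau$. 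The term corresponding to $S = h_i$ equals $(-1)^{\ell(h_i)} \chi^\mu(\lambda) \neq 0$, so at least one further $\tau_i \in N(\mu, |h_i|) \setminus \{\mu\}$ must satisfy $\chi^{\tau_i}(\lambda) \neq 0$. This settles the first assertion.

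For the distinctness claim I would argue by contradiction, following the method of \cite{Morotti}. Suppose two distinct hooks $h_i \neq h_j$ are forced to yield the same $\tau_i = \tau_j =: \tau$. Then $\tau \neq \mu$ lies in $N(\mu, |h_i|) \cap N(\mu, |h_j|)$, and I would invoke Lemma \ref{lem:sgn} with
\[
\gamma_1 = \bar\mu_{h_i},\quad \gamma_2 = \bar\mu_{h_j},\quad \delta_1 = \mu,\quad \delta_2 = \tau,
\]
and leg lengths $l_{1,1} = \ell(h_i)$, $l_{2,1} = \ell(h_j)$, $l_{1,2} = \ell(S_\tau^i)$, $l_{2,2} = \ell(S_\tau^j)$. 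The three hypotheses of the lemma are readily verified: distinct hooks of $\mu$ have distinct complements, so $\bar\mu_{h_i} \neq \bar\mu_{h_j}$; $\mu \neq \tau$ by construction; and $\mu, \tau \in P(n)$ have equal size, so neither is obtained from the other by removing a nonempty rim hook. The lemma then delivers the parity
\[
\ell(h_i) + \ell(S_\tau^i) + \ell(h_j) + \ell(S_\tau^j) \;\equiv\; 1 \pmod{2}.
\]

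To close the contradiction, I would compare this with the parity forced by the two identities $\psi_{\bar\mu_{h_i},n}(\lambda) = \psi_{\bar\mu_{h_j},n}(\lambda) = 0$ from Step 1. In the tightest scenario, where $\mu$ and $\tau$ are the only partitions in each neighborhood contributing nonzero values on $\lambda$, these collapse to the pair
\[
(-1)^{\ell(h_i)} \chi^\mu(\lambda) + (-1)^{\ell(S_\tau^i)} \chi^\tau(\lambda) = 0 = (-1)^{\ell(h_j)} \chi^\mu(\lambda) + (-1)^{\ell(S_\tau^j)} \chi^\tau(\lambda),
\]
and equating the resulting expressions for $\chi^\mu(\lambda)/\chi^\tau(\lambda)$ forces $\ell(h_i) + \ell(S_\tau^i) \equiv \ell(h_j) + \ell(S_\tau^j) \pmod 2$, i.e.\ the total exponent is even, contradicting the lemma.

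The main obstacle I anticipate is the general case where each $A_i := \{\tau \in N(\mu, |h_i|)\setminus\{\mu\} : \chi^\tau(\lambda) \neq 0\}$ may contain several candidates, so that choosing pairwise distinct $\tau_i$'s becomes a matching question rather than a forced identification. I would handle this by verifying Hall's marriage condition $|\bigcup_{i \in I} A_i| \geq |I|$ for every $I \subseteq \{1,\ldots,l\}$, applying the same sign argument to a hypothetical minimally violating $I$ (for which the identities $\psi_{\bar\mu_{h_i},n}(\lambda) = 0$ for $i \in I$ collapse in a controlled way) and extracting an analogous parity contradiction; this then produces the required system of distinct representatives $\tau_1, \ldots, \tau_l$.
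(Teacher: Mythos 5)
Your first step coincides with the paper's: apply Theorem \ref{thm:ssum} to $\rho=\bar{\mu}_{h}$ with $m_{r,\lambda}=0$, note that the term indexed by $\mu$ is nonzero, and conclude that some other $\tau\in N(\mu,|h|)$ has $\chi^\tau(\lambda)\neq0$. That part is fine. The gap is in the distinctness claim. Your parity contradiction via Lemma \ref{lem:sgn} only closes in what you call the ``tightest scenario,'' where $\mu$ and $\tau$ are the \emph{only} nonzero contributors in both neighborhoods; as soon as either vanishing sum $\psi_{\bar{\mu}_{h_i},n}(\lambda)=0$ contains further nonzero terms, you cannot equate the two expressions for $\chi^\mu(\lambda)/\chi^\tau(\lambda)$, and the Hall's-marriage repair you sketch is not carried out: Lemma \ref{lem:sgn} is intrinsically a statement about a $2\times2$ configuration $(\gamma_1,\gamma_2,\delta_1,\delta_2)$, and it is not clear how a ``minimally violating $I$'' with $|I|\geq 3$ and multi-term vanishing sums would yield an analogous parity contradiction. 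So as written the ``moreover'' part is not proved.

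The missing idea, which is how the paper argues, is to refine the choice made in the existence step. Since $\sum_{S}(-1)^{\ell(S)}\chi^{\bar{\mu}_{h_i}*S}(\lambda)=0$ and the term $(-1)^{\ell(S^i_\mu)}\chi^\mu(\lambda)$ is nonzero, you may choose $\tau_i$ so that $(-1)^{\ell(S^i_{\tau_i})}\chi^{\tau_i}(\lambda)$ has sign \emph{opposite} to $(-1)^{\ell(S^i_\mu)}\chi^\mu(\lambda)$ (at least one summand of opposite sign must exist), and this holds no matter how many other nonzero terms the sum contains. With this sign-refined selection, suppose $\tau_i=\tau_j=\tau$ for $h_i\neq h_j$; your application of Lemma \ref{lem:sgn} (which is verified correctly) says exactly one of the four leg-length signs $(-1)^{\ell(S^i_\mu)},(-1)^{\ell(S^j_\mu)},(-1)^{\ell(S^i_\tau)},(-1)^{\ell(S^j_\tau)}$ differs from the other three. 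Comparing the two opposite-sign conditions for $i$ and for $j$ then forces $\chi^\mu(\lambda)$ and $\chi^\tau(\lambda)$ to have simultaneously the same and different signs, a contradiction --- with no assumption on the number of nonzero terms and no need for a system of distinct representatives. In short: keep your Step 1 and your Lemma \ref{lem:sgn} setup, but record the opposite-sign property of $\tau_i$ at the moment you construct it; that single refinement replaces the entire ``tightest scenario''/Hall argument and completes the proof.
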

\begin{proof}
Suppose that $|h|=r$. Define a virtual character of $S_n$ by
$$\psi_{\mu,r}=\sum_{\bar{\mu}_h*S\in N(\mu,|h|)}(-1)^{\ell(S)}\chi^{\bar{\mu}_h*S}.$$
Apparently, $\psi_{\mu,r}$ is just the virtual character discussed in Theorem \ref{thm:ssum}. By assumption $\lambda$ contains no cycles of length $r$, so we have
$\psi_{\mu,r}(\lambda)=0.$ By definition we know that $\mu\in N(\mu,|h|)$. Assume that
$\mu=\bar{\mu}_h*S_{\mu}$, so there exists another $\tau\in N(\mu,|h|)$ such that
$\chi^{\tau}(\lambda)\neq0$ and $\tau\neq\mu$. Let $\tau=\bar{\mu}_h*S_{\tau}$ for some rim hook $S_{\tau}$. Then we know that $(-1)^{\ell(S_\tau)}\chi^{\tau}(\lambda)$ and $(-1)^{\ell(S_\mu)}\chi^{\mu}(\lambda)$ have different signs.

Suppose that $h_i\in H(\mu)$ for $1\leq i \leq l$ which are different and $|h_i|\notin\{\lambda_1,\lambda_2,\ldots, \lambda_k\}$. Then by previous discussion there exist $\tau_i$ ($1\leq i \leq l$)  which satisfy $\chi^{\tau_i}(\lambda)\neq0$. Next, we show that if $h_i\neq h_j$, then $\tau_i\neq \tau_j$. Without loss of generality, we assume that $i=1$, $j=2$.
Conversely, suppose that $\tau_1=\tau_2=\tau$. Let $\gamma_1=\bar{\mu}_{h_1}$, $\gamma_2=\bar{\mu}_{h_2}$ , $\delta_1=\mu$ and $\delta_2=\tau$. There exist $S_{\mu}^i$, $S_{\tau}^i$ ($i=1,2$) such that $\mu=\bar{\mu}_{h_1}*S_{\mu}^1=\bar{\mu}_{h_2}*S_{\mu}^2$ and $\tau= \bar{\mu}_{h_1}*S_{\tau}^1=\bar{\mu}_{h_2}*S_{\tau}^2$.
Then by Lemma \ref{lem:sgn} we have $(-1)^{\ell(S_{\mu}^1)}(-1)^{\ell(S_{\mu}^2)}
(-1)^{\ell(S_{\tau}^1)}(-1)^{\ell(S_{\tau}^2)}=-1$. So three signs of the leg lengths are same except the other one. Suppose the sign of $(-1)^{\ell(S_{\tau}^2)}$ is different with others, especially with $(-1)^{\ell(S_{\mu}^2)}$. If $\chi^{\mu}(\lambda)$, $\chi^{\tau}(\lambda)\neq0$ have different signs, then $(-1)^{\ell(S_{\tau}^2)}\chi^{\tau}(\lambda)$ and $(-1)^{\ell(S_{\mu}^2)}\chi^{\mu}(\lambda)$ have the same sign which contradicts with previous discussion. If $\chi^{\mu}(\lambda)$, $\chi^{\tau}(\lambda)\neq0$ have the same sign, similar discussion can be used to get the contradiction.
\end{proof}

In \cite{PPV} Pak et al gave a method to determine the positivity of Kronecker coefficients by characters. In the following, we use Theorem \ref{thm:neib} to find more nonzero Kronecker coefficients. Combining Theorem \ref{thm:neib} and Lemma 1.3 in \cite{PPV} we have the following corollary.
\begin{corollary}\label{cor:nbkc}
Let $\mu=\mu^\prime$ be a self-conjugate partition of $n$, and let $\hat{\mu}=
(2 \mu_1 - 1, 2 \mu_2 - 3, 2 \mu_3 - 5,\ldots)\vdash n$ be the principal hook partition. Suppose $\chi^\lambda(\hat{\mu})\neq0$ for some $\lambda\vdash n$. For each hook $h\in H(\lambda)$ with length $|h|=r$, if $r\neq 2 \mu_1 - 1, 2 \mu_2 - 3, 2 \mu_3 - 5,\ldots$ the hooklengths of principal hooks of $\mu$, then there exists a $\tau\in N(\lambda,|h|)$ with $\tau\neq\lambda$ such that $\chi^{\tau}(\hat{\mu})\neq0$ and
$\chi^{\tau}\in \chi^{\mu}\otimes\chi^\mu$.
\end{corollary}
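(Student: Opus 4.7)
The plan is to observe that this corollary is a direct splicing of Theorem \ref{thm:neib} with Pak et al.'s character criterion (Lemma 1.3 of \cite{PPV}), once we translate between the two notational conventions involved. The key preliminary remark is that, because $\mu$ is self-conjugate of Durfee size $s$, the hooklength of the $i$-th principal hook is $|h_{i,i}| = 2\mu_i - 2i + 1$, so the parts of $\hat\mu$ are exactly the numbers $2\mu_1 - 1, 2\mu_2 - 3, \ldots, 2\mu_s - 2s+1$. Thus the hypothesis ``$r \neq 2\mu_1 - 1, 2\mu_2 - 3, \ldots$'' is nothing but the statement that $r$ does not appear as a part of the partition $\hat\mu$.

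Next I would apply Theorem \ref{thm:neib}, with its ``$\mu$'' playing the role of our $\lambda$ and its ``$\lambda$'' playing the role of our $\hat\mu$. The assumption $\chi^\lambda(\hat\mu) \neq 0$ is exactly the hypothesis of that theorem, and the preceding paragraph ensures that the hook $h \in H(\lambda)$ of length $r$ satisfies $|h| \notin \{\hat\mu_1, \hat\mu_2, \ldots\}$. The theorem therefore produces a partition $\tau \in N(\lambda, |h|)$ with $\tau \neq \lambda$ such that $\chi^\tau(\hat\mu) \neq 0$.

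Finally, I would invoke Pak et al.'s character criterion: for a self-conjugate $\mu = \mu'$ of $n$ and any $\tau \vdash n$ with $\chi^\tau(\hat\mu) \neq 0$, one has $g(\tau, \mu, \mu) > 0$, which is equivalent to $\chi^\tau$ being a constituent of $\chi^\mu \otimes \chi^\mu$. Applied to the $\tau$ produced in the previous step, this yields the desired conclusion.

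I do not expect any genuine obstacle: the whole argument is a two-line chain of implications. The only point that demands care is the bookkeeping translation between ``parts of $\hat\mu$'' (which is the cycle-type condition appearing in Theorem \ref{thm:neib}) and ``hooklengths of the principal hooks of $\mu$'' (which is how the hypothesis is phrased in the corollary). Once that identification is made explicit, the rest is an immediate invocation of Theorem \ref{thm:neib} followed by the PPV criterion.
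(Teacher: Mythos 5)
Your proposal is correct and is exactly the argument the paper intends: the corollary is stated as an immediate combination of Theorem \ref{thm:neib} (applied with its ``$\mu$'' being our $\lambda$ and its ``$\lambda$'' being $\hat\mu$, after identifying the principal hooklengths $2\mu_i-2i+1$ of a self-conjugate $\mu$ with the parts of $\hat\mu$) and Lemma 1.3 of \cite{PPV}. Nothing is missing.
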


\begin{corollary}\label{cor:dshl}
 For partitions $\mu\vdash n$, $\hat{\mu}=(\hat{\mu}_1,\hat{\mu}_2,\ldots,\hat{\mu}_k)$ where $k=d(\mu)$.
Let $A=\{\lambda|d(\lambda)\leq2, \chi^\lambda(\hat{\mu})\neq0\}$,
 $B=\{\nu|k-1\leq d(\nu)\leq k+1, \chi^\nu(\hat{\mu})\neq0\}$. Let $h(\mu,\hat{\mu})$ be the number of boxes in the Young diagram of $\mu$ with no hooklengths in $\{\hat{\mu}_1,\hat{\mu}_2,\ldots,\hat{\mu}_k\}$. Then we have
\begin{enumerate}
\item $|A|\geq n-k+1$,
\item $|B|\geq h(\mu,\hat{\mu})+1$.
\end{enumerate}
\end{corollary}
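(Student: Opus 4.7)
The plan is to invoke Theorem~\ref{thm:neib} on a carefully chosen seed partition in each part, read off the new partitions it produces, and check the Durfee-size bound for each. The seed for part~(1) will be the trivial one-row partition $(n)$; the seed for part~(2) will be $\mu$ itself. In both cases the count is (number of admissible hooks of the seed) $+\,1$.

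For (1), since $\chi^{(n)}$ is the trivial character, $\chi^{(n)}(\hat\mu)=1\neq 0$ and $d((n))=1\leq 2$, so $(n)\in A$. The hook lengths of $(n)$ are $\{1,2,\ldots,n\}$, pairwise distinct, so exactly $n-k$ of them avoid $\{\hat\mu_1,\ldots,\hat\mu_k\}$. For each such length $r$, applying Theorem~\ref{thm:neib} with $(n)$ and $\hat\mu$ in the roles of $\mu$ and $\lambda$ there produces a partition $\tau_r\in N((n),r)$ with $\tau_r\neq(n)$ and $\chi^{\tau_r}(\hat\mu)\neq 0$; the distinctness clause of that theorem makes the $\tau_r$ pairwise distinct. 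To place each $\tau_r$ in $A$ I use the standard fact that a rim hook of length $r$ occupies $r$ consecutive diagonals with exactly one cell on each; in particular it meets the main diagonal in at most one cell, so attaching it to $(n-r)$ (of Durfee size at most $1$) yields $d(\tau_r)\leq 2$. Together with $(n)$ itself this gives $|A|\geq n-k+1$.

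For (2), I first need $\mu\in B$, i.e.\ $\chi^\mu(\hat\mu)\neq 0$. This follows from the Murnaghan--Nakayama rule: the first rim hook removed must have length $\hat\mu_1$, and since the maximum hook length of any partition is attained only at its principal $(1,1)$-hook, this removal is forced; the residual partition has principal-hook sequence $(\hat\mu_2,\ldots,\hat\mu_k)$, and induction yields $\chi^\mu(\hat\mu)=\pm 1$. By definition $h(\mu,\hat\mu)$ equals the number of hooks $h\in H(\mu)$ with $|h|\notin\{\hat\mu_1,\ldots,\hat\mu_k\}$, so Theorem~\ref{thm:neib} hands back one distinct $\tau\in N(\mu,|h|)$ per such hook, with $\tau\neq\mu$ and $\chi^\tau(\hat\mu)\neq 0$. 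Each such $\tau$ is obtained from $\mu$ by removing one rim hook and attaching another of the same length; by the same one-cell-per-diagonal property each operation changes $d$ by at most~$1$, and since removal cannot increase $d$ while addition cannot decrease it, we get $d(\tau)\in\{k-1,k,k+1\}$, so $\tau\in B$. Summing, $|B|\geq h(\mu,\hat\mu)+1$.

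The only delicate point is the Durfee-size control under rim-hook moves, which in both parts reduces to the diagonal-counting property of rim hooks; all other ingredients are the distinctness clause of Theorem~\ref{thm:neib} together with the classical non-vanishings $\chi^{(n)}(\hat\mu)=1$ and $\chi^\mu(\hat\mu)=\pm 1$.
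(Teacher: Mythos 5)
Your proposal is correct and follows essentially the same route as the paper: seed $(n)$ for part (1) and $\mu$ itself for part (2), use the nonvanishing $\chi^{(n)}(\hat\mu)=1$ and $\chi^{\mu}(\hat\mu)=\pm1$, count the admissible hooks ($n-k$, resp.\ $h(\mu,\hat\mu)$), and invoke Theorem~\ref{thm:neib} together with the one-cell-per-diagonal property of rim hooks to control the Durfee size. You merely supply more detail than the paper (the M--N derivation of $\chi^{\mu}(\hat\mu)=\pm1$ and the explicit Durfee bookkeeping for part (2), which the paper leaves as ``similar to (1)''), and that detail checks out.
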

\begin{proof}
(1) For $(n)\in P(n)$, we have $\chi^{(n)}(\hat{\mu})=1$ and the Young diagram of $(n)$ consists of boxes with hooklengths $\{1,2,\ldots,n\}$ exactly once. Hence, there are $n-k$ boxes in the Young diagram of $(n)$ with hooklengths $r\notin \{\hat{\mu}_1,\hat{\mu}_2,\ldots,\hat{\mu}_k\}$.
By Theorem \ref{thm:neib}, besides $(n)$ there exists $n-k$ distinct partitions whose character on $\hat{\mu}$ are nonzero. For each $h\in H((n))$, by definition we know that the Durfee size of partitions in $N((n),|h|)$ is not greater than $2$.

(2) The proof is similar to (1), if we use $\chi^{\mu}(\hat{\mu})=(-1)^{n-d(\mu)/2}$ to estimate $|B|$.
\end{proof}

In Corollary \ref{cor:dshl}, we can see that if $k\geq4$, then elements in $A$ and $B$ are different. For $\lambda$, $\tau\in P(n)$, if $\hat{\tau}=\hat{\mu}$ we have $\chi^{\tau}(\hat{\mu})=1$ or $-1$. So in \cite{PPV}, the authors gave an estimation of nozero characters on the shape of staircase and caret \cite[Prop. 4.14]{PPV}. However, there are only one partitions with the same principal hook partition as the chopped square. So we can use Corollary \ref{cor:dshl} to obtain a better lower bound for the  chopped square partition. The following is the hooklength diagram of $\eta_5=(5,5,5,5,4)$:

\begin{center}
\begin{ytableau}
9& 8 & 7 & 6&4\\
8& 7 & 6 & 5&3\\
7& 6 & 5 & 4&2\\
6& 5 & 4 & 3&1\\
4& 3 & 2 &1
\end{ytableau}

\end{center}

\begin{corollary}
Suppose that $\eta_k=(k^{k-1},k-1)\vdash k^2-1$ and $n=k^2-1$. Let $\Psi(\eta_k)=\{\lambda\in P(n)| \chi^\lambda(\hat{\eta}_k)\neq0\}$. Then we have
\begin{enumerate}
  \item $|\Psi(\eta_k)|\geq \frac{3}{2}k^2-k+1$, if $k\geq4$ is even;
  \item $|\Psi(\eta_k)|\geq \frac{3k^2+5}{2}-k$, if $k\geq4$ is odd.
\end{enumerate}
\end{corollary}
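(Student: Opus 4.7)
The plan is to apply Corollary~\ref{cor:dshl} to the self-conjugate partition $\eta_k$. Since $\eta_k=(k^{k-1},k-1)$ has Durfee size $k-1$, its principal hook partition is $\hat{\eta}_k=(2k-1,2k-3,\ldots,3)$, the set of odd integers between $3$ and $2k-1$. Corollary~\ref{cor:dshl} then supplies
$$|A|\geq n-(k-1)+1=k^2-k+1,\qquad |B|\geq h(\eta_k,\hat{\eta}_k)+1.$$
For $k\geq 5$ we have $d(\eta_k)=k-1\geq 4$, so the remark after Corollary~\ref{cor:dshl} gives $A\cap B=\emptyset$, whence $|\Psi(\eta_k)|\geq|A|+|B|$.

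The core task is to evaluate $h(\eta_k,\hat{\eta}_k)$, the number of cells of $\eta_k$ whose hooklength is not in $\hat{\eta}_k$. I split the Young diagram into three regions: the top-left $(k-1)\times(k-1)$ square, the last column (cells $(i,k)$ with $1\leq i\leq k-1$), and the last row (cells $(k,j)$ with $1\leq j\leq k-1$). Direct calculation gives $h_{i,j}=2k-i-j+1$ in the square, $h_{i,k}=k-i$ in the last column, and $h_{k,j}=k-j$ in the last row. In the square, $h_{i,j}$ is odd (and hence automatically lies in $\{3,5,\ldots,2k-1\}$) iff $i+j$ is even, while in the two tail strips a hook lies in $\hat{\eta}_k$ iff it is odd and at least $3$. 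Splitting by the parity of $k$, one finds the number of cells with hook in $\hat{\eta}_k$ is $(k/2)^2+(k/2-1)^2+(k-2)=k^2/2-1$ when $k$ is even, and $2((k-1)/2)^2+(k-3)=(k^2-5)/2$ when $k$ is odd, so
$$h(\eta_k,\hat{\eta}_k)=\begin{cases}k^2/2,& k\text{ even},\\[2pt](k^2+3)/2,& k\text{ odd}.\end{cases}$$

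Plugging these values into $|\Psi(\eta_k)|\geq(k^2-k+1)+h(\eta_k,\hat{\eta}_k)+1$ produces the two claimed inequalities, in fact with one to spare, for every $k\geq 5$. The main obstacle is the parity bookkeeping when tallying hooks across the three regions, in particular the contribution of the two tail strips, whose count shifts depending on whether $k-1$ is odd or even. A secondary subtlety is the borderline case $k=4$: there $d(\eta_4)=3$, so $A$ and $B$ need not be disjoint, and one must verify separately that $|A\cap B|\leq 1$ (so $|A\cup B|\geq 13+9-1=21$) in order to recover the stated bound from the same pair of estimates.
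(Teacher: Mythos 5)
For $k\ge 5$ your argument is correct and is essentially the paper's proof: you invoke Corollary~\ref{cor:dshl} for $\eta_k$, and your three-region count of hooklengths (the $(k-1)\times(k-1)$ square with $h_{i,j}=2k-i-j+1$, plus the last row and column with hooks $1,\dots,k-1$) gives exactly the paper's values $h(\eta_k,\hat{\eta}_k)=k^2/2$ for $k$ even and $(k^2+3)/2$ for $k$ odd; in fact, by using the correct Durfee size $d(\eta_k)=k-1$ in the bound $|A|\ge n-d+1$ you get the stated inequalities with one to spare, slightly sharper than the paper's own substitution.

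The one genuine problem is the case $k=4$, which you rightly flag (the paper silently asserts disjointness of $A$ and $B$ for $k\ge4$, although its own remark only guarantees it when the Durfee size is at least $4$, i.e.\ $k\ge5$), but your proposed repair does not work. With $A$ and $B$ as defined in Corollary~\ref{cor:dshl}, $A\cap B$ is the set of partitions of $15$ of Durfee size exactly $2$ with $\chi^\lambda(\hat{\eta}_4)\neq0$, where $\hat{\eta}_4=(7,5,3)$; this set is not of size at most $1$. For instance, a Murnaghan--Nakayama computation via $\beta$-sets gives $\chi^{(9,6)}(7,5,3)=\pm1$ (remove the unique $7$-hook to reach $(5,3)$, then $(2,1)$, then $\emptyset$) and $\chi^{(8,7)}(7,5,3)=\pm1$ (one of the two $7$-hooks leads to $(6,2)$, which has no $5$-hook, the other to $(8)$), and the conjugates $(2^6,1^3)$ and $(2^7,1)$ also lie in $A\cap B$, so $|A\cap B|\ge4$ and inclusion--exclusion with $|A|\ge13$, $|B|\ge9$ only yields $|A\cup B|\ge18<21$. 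To settle $k=4$ you would instead have to argue at the level of the explicit witnesses produced in the proof of Corollary~\ref{cor:dshl} — namely that the partitions arising from the neighborhoods of $(15)$ are distinct from those arising from the neighborhoods of $\eta_4$ — or simply verify $n=15$ by direct computation; as written, neither your proposal nor, strictly speaking, the paper covers this borderline case.
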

\begin{proof}
By the notation in Corollary \ref{cor:dshl}, we have that
$$|\Psi(\eta_k)|\geq |A|+|B|=n-k+h(\eta_k,\hat{\eta}_k)+2,$$
for $k\geq4$. The number $h(\eta_k,\hat{\eta}_k)$ can be obtained by firstly counting the boxes besides the last column and row of $H(\eta_k)$ and then others. Then the result is followed by
\begin{equation*}
\begin{split}
h(\eta_k,\hat{\eta}_k)
=\left\{ \begin{aligned}
&\frac{k^2}{2}              \quad\text{if}~k ~\text{is even}\\
& \frac{k^2+3}{2}           \quad\text{if}~k~\text{is odd}
        \end{aligned} \right..
\end{split}
\end{equation*}
\end{proof}

\subsection*{Application: a class of nonzero Kronecker coefficients related to staircase shape}

\

Corollary \ref{cor:nbkc} just implies the existence of nonzero irreducible characters. By the properties of $\beta$-set, in the following we will find them by an example: partitions with staircase shape. And then we will obtain a class of nonzero Kronecker coefficients that can be detected by Pak et al.'s criterion.

Let $\rho_m=(m,m-1,\ldots,1)\vdash n$ ($m\in \mathbb{N}^+$) be the partitions with staircase shape. Take $m=2k$ for example. The hooklengths of $\rho_m$ are consisted of $\{1,3,5,\ldots,4k-1\}$ and $\hat{\rho}_m=(4k-1,4k-5,\ldots,7,3)$.
For those $h\in H(\rho_m)$ with length $|h|\notin \{4k-1,4k-5,\ldots,7,3\}$, there exists $\tau\in N(\rho_m, |h|)$ such that $\chi^\tau(\hat{\rho}_m)\neq0$ and $\tau\neq \rho_m$ by the fact $\chi^{\rho_m}(\hat{\rho}_m)\neq0$ and Corollary \ref{cor:nbkc}. We will find such $\tau$ in $N(\rho_m, |h_{1,j}|)$, $N(\rho_m, |h_{j,1}|)$ and $N(\rho_m, |h|)$ with $|h|=1$ for $m=2k$ and $j=2,4,\ldots,2k$.
We can see that those boxes lie on the boundary of $\rho_m$. Similar results hold when $m=2k-1$ is odd.
Take $m=6$ and the shaded yellow boxes in the hooklength diagram of $\rho_6$ for example:
\begin{center}
\begin{ytableau}
11& *(yellow)9 & 7  &  *(yellow)5& 3 & *(yellow)1\\
 *(yellow)9& 7 & *(green)5 & 3 &*(yellow)1\\
7& *(green)5 & 3 & *(yellow)1\\
 *(yellow)5& 3 & *(yellow)1 \\
3& *(yellow)1\\
 *(yellow)1
\end{ytableau}\\
Figure 1: The hooklength diagram of $\rho_6 $
\end{center}

In the following lemma, we use notions in Chapter 1 of \cite{Olsson}.  A $\beta$-set for $\lambda$ is usually taken by the first column hooklength of $\lambda$. That is, if
$X_{\lambda}=\{x_1,x_2,\ldots,x_t\}$ is a $\beta$-set for $\lambda=(\lambda_1,\lambda_2,\ldots,\lambda_t)$, then $x_i=\lambda_i+t-i$ for $i=1,2,\ldots,t$. For each $\beta$-set $X=\{x_1,x_2,\ldots,x_t\}$, it corresponds to a unique partition $\mu$ which is defined by
$$\mu=(x_1-(t-1),x_2-(t-2),\ldots,x_t).$$
Let $H_{i}(\lambda)$ denote the set of hooklengths in row $i$ of $\lambda$.
\begin{lemma}\cite[Cor. 1.5]{Olsson}\label{lem:betah}
If $X=\{x_1,x_2,\ldots,x_t\}$ is a $\beta$-set for $\lambda$ and $r\in \mathbb{N}$, then
$$r\in H_{i}(\lambda)~\Leftrightarrow~x_i-r\geq0~and~x_i-r\notin X.$$
\end{lemma}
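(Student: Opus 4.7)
The plan is to establish the lemma through the standard abacus interpretation of partitions: hook lengths in row $i$ correspond bijectively to the ``holes'' (non-bead positions) strictly below $x_i$ in the $\beta$-set, via the map $j \mapsto x_i - h_{i,j}$. Both directions of the equivalence then fall out once this bijection is set up.

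The first step is a direct computation. Writing the hook length as arm plus leg plus one, $h_{i,j} = (\lambda_i - j) + (\lambda'_j - i) + 1$, and substituting into $x_i = \lambda_i + t - i$ gives
\[
x_i - h_{i,j} \;=\; (t - \lambda'_j) + (j-1),
\]
which is manifestly a non-negative integer for any $j \in \{1,\ldots,\lambda_i\}$. This settles the inequality $x_i - r \geq 0$ in the forward direction.

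The main step, and what I expect to be the principal obstacle, is verifying $x_i - h_{i,j} \notin X$. Assume for contradiction $(t-\lambda'_j)+(j-1) = x_k = \lambda_k + t - k$ for some $k$; rearranging gives $\lambda_k = (j-1) + (k - \lambda'_j)$. A short case split on $k \leq \lambda'_j$ versus $k > \lambda'_j$ forces a contradiction with the definition of $\lambda'_j$ in each case, since $k \leq \lambda'_j$ implies $\lambda_k \geq j$ while the right-hand side is $\leq j-1$, and the symmetric inequalities hold when $k > \lambda'_j$. This completes the $(\Rightarrow)$ direction.

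For $(\Leftarrow)$ I would run a cardinality argument. Because $\lambda'_j$ is weakly decreasing in $j$, the map $j \mapsto (t-\lambda'_j)+(j-1)$ has increment $1 + \lambda'_j - \lambda'_{j+1} \geq 1$, so it is strictly increasing and hence an injection from $\{1,\ldots,\lambda_i\}$ into $\{0,1,\ldots,x_i-1\}\setminus X$. On the other hand, since $x_i = \lambda_i + t - i$ implies that exactly $t-i$ elements of $X$ lie below $x_i$, the hole set $\{0,\ldots,x_i-1\}\setminus X$ has size $x_i - (t-i) = \lambda_i$. Equal cardinalities upgrade the injection to a bijection, so any $y = x_i - r$ meeting the hypotheses is the image of some $j$, whence $r = h_{i,j} \in H_i(\lambda)$. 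The rest of the argument is bookkeeping; the only technically delicate moment is the two-case comparison in the third paragraph.
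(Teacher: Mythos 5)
Your argument is correct, but note that the paper itself offers no proof of this statement: Lemma \ref{lem:betah} is simply quoted from Olsson \cite[Cor.\ 1.5]{Olsson}, so there is nothing internal to compare against. What you have written is a sound, self-contained version of the standard bead-and-hole (abacus) argument that underlies Olsson's treatment: the computation $x_i-h_{i,j}=(t-\lambda'_j)+(j-1)$ is right, the two-case comparison with $\lambda'_j$ correctly rules out $x_i-h_{i,j}\in X$, and the counting step is valid because the map $j\mapsto x_i-h_{i,j}$ is strictly increasing while the complement of $X$ in $\{0,1,\ldots,x_i-1\}$ has exactly $x_i-(t-i)=\lambda_i$ elements. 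The only point you use silently is that the first-column-hook-length $\beta$-set consists of $t$ distinct values with $x_1>x_2>\cdots>x_t$, which is what justifies "exactly $t-i$ elements of $X$ lie below $x_i$"; this is immediate from $x_i=\lambda_i+t-i$ and the weak decrease of $\lambda$, but it is worth one explicit sentence. With that remark added, your proof would serve as a complete substitute for the citation.
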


A box $(i,\lambda_i)\in\lambda$ is called a {\it removable} box (for $\lambda$)  if $\lambda_i>\lambda_{i+1}$. A box $(i,\lambda_i+1)$
is called {\it addable} (for $\lambda$)  if $i=1$ or $i>1$ and $\lambda_i<\lambda_{i-1}$.
We can see that a box is removable if and only if the corresponding hooklength is 1.
For $h\in H(\lambda)$, if $|h|=1$, $N(\lambda,|h|)$ is the set of all partitions (include $\lambda$) formed by moving out a removable box of $\lambda$ and then putting an addable box on the resulting partition. In this case, we denote  $N(\lambda,|h|)$ by $N(\lambda,1)$.

\begin{proposition}\label{prp:neigb}
 Let $\rho=(\rho_1,\rho_2,\ldots,\rho_k)$ be a partition in $P(n)$. Then for each $\tau\in N(\rho,1)$  we have $\chi^\tau(\hat{\rho})=0$ or $\pm1$.
\end{proposition}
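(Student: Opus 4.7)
My plan is to prove the proposition by induction on the Durfee size $d=d(\rho)$. The base cases are direct: $d=0$ is vacuous, and if $d=1$ then $\rho$ is a hook and $\hat\rho=(n)$, so the standard hook-character formula gives $\chi^\tau(n)\in\{0,\pm 1\}$ for every $\tau\vdash n$. For $d\geq 2$, set $r=|h_{1,1}(\rho)|$ and $\rho^{(1)}=\rho\setminus h_{1,1}(\rho)$; then $d(\rho^{(1)})=d-1$ and $\widehat{\rho^{(1)}}=\hat\rho\setminus\{r\}$, and the Murnaghan--Nakayama rule gives
\[
\chi^\tau(\hat\rho)=\sum_S (-1)^{\ell(S)}\,\chi^{\tau\setminus S}\bigl(\widehat{\rho^{(1)}}\bigr),
\]
where $S$ runs over the length-$r$ rim hooks of $\tau$. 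The argument reduces to two sub-claims: (a) $\tau$ has at most one rim hook of length $r$; (b) when such an $S$ exists, $\tau\setminus S\in N(\rho^{(1)},1)$. Granting (a) and (b), the sum has at most one term, and the induction hypothesis applied to $\rho^{(1)}$ yields $\chi^{\tau\setminus S}(\widehat{\rho^{(1)}})\in\{0,\pm 1\}$, whence $\chi^\tau(\hat\rho)\in\{0,\pm 1\}$.

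For~(a), a single box move changes $\tau_1+\tau_1'$ by at most $1$ from $\rho_1+\rho_1'=r+1$, so the maximal hook length $\tau_1+\tau_1'-1$ of $\tau$ lies in $\{r-1,r,r+1\}$. If it equals $r-1$, $\tau$ has no hook of length $r$; if it equals $r$, the maximum is uniquely attained at $(1,1)$ and this is the only hook of length $r$. If it equals $r+1$, a short hook-length computation forces any hook of length $r$ to sit at $(1,2)$ (requiring $\tau_1'=\tau_2'$) or $(2,1)$ (requiring $\tau_1=\tau_2$); but the move producing $\tau_1+\tau_1'=r+2$ either extends row $1$ (so $\tau_1=\rho_1+1>\rho_1\geq\tau_2$) or extends column $1$ (so $\tau_1'=\rho_1'+1>\rho_1'\geq\tau_2'$), and so at most one of these two positions realizes hook length $r$.

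For~(b), I run a case analysis by the location of $S$. When $S$ is rooted at $(1,1)$, the classical identity $\tau\setminus S=(\tau_2-1,\tau_3-1,\ldots)$ applies; comparing termwise with $\rho^{(1)}=(\rho_2-1,\rho_3-1,\ldots)$, the one-box modification distinguishing $\tau$ from $\rho$ either descends to a single interior box move between $\rho^{(1)}$ and $\tau\setminus S$ or disappears entirely (when the move in $\rho$ involved the extreme corner $(\rho_1',1)$ or $(1,\rho_1)$, leaving $\tau\setminus S=\rho^{(1)}$). When $S$ lies at $(1,2)$ or $(2,1)$, the removal leaves column $1$ (resp.\ row $1$) intact and strips the rest to an inner shape, and the box added to extend row $1$ (resp.\ column $1$) of $\rho$ is absorbed by $S$; the net discrepancy with $\rho^{(1)}$ is that one interior row (resp.\ column) loses a box while a new last row (resp.\ column) of length $1$ appears, which is again a single box move.

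The main obstacle is executing the case analysis in~(b): one must track how the distinguishing box modification between $\rho$ and $\tau$ survives the peeling of a length-$r$ rim hook. The $(1,2)$ and $(2,1)$ sub-cases are the most delicate, since here the box added to row $1$ (or column $1$) of $\rho$ is swallowed by $S$ and must reappear as a fresh outermost row (or column) of $\tau\setminus S$ for the result to land in $N(\rho^{(1)},1)$.
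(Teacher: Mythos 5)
Your proposal is correct and follows essentially the same route as the paper: apply the Murnaghan--Nakayama rule to the principal hooks of $\rho$ one at a time, verify that at each stage the partition at hand has at most one rim hook of the required principal hooklength, and note that removing it lands you back in a $1$-neighborhood of the next truncation, so the computation telescopes to $0$ or $\pm1$. The differences are only in packaging: you organize the iteration as induction on the Durfee size with the explicit closure claim (b), and you get uniqueness from the elementary bound on $\tau_1+\tau_1'$, whereas the paper argues via $\beta$-sets for the move-to-first-row case and reduces the general box move to it by conjugation.
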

\begin{proof}
Suppose that the Durfee size of $\rho$ is $d$. Let $\hat{\rho}=(\hat{\rho}_1,\hat{\rho}_2,\ldots,\hat{\rho}_d)$ be the principal hook partition of $\rho$.
Firstly, we consider the partition $\rho^{(i,1)}\in N(\rho,1)$ which is formed by putting the removable box in row $i$ ($i\geq2$) of $\rho$ to the addable box $(1,\rho_1+1)$ in the first row. Let $X_{\rho}=\{h_{1},h_{2},\ldots,h_{k}\}$ be the $\beta$-set of $\rho$ which is formed by the first column hooklengths. Then we have
$$h_i=\rho_i+k-i,$$
where $i=1,2,\ldots,k$.

Let $X^{(i,1)}$ be the $\beta$-set of $\rho^{(i,1)}$. Then we have
$$X^{(i,1)}=\{h_{1}+1,\ldots,h_{i-1},h_{i}-1,h_{i+1},\ldots,h_{k}\}.$$
By definition we know that $h_1=\hat{\rho}_1$ and $\hat{\rho}_1>h_2$. By Lemma \ref{lem:betah}, if  $\hat{\rho}_1$ appear in the hooklength diagram of $\rho^{(i,1)}$, we should have $h_k>1$. Moreover, there exists at most one hook with length $\hat{\rho}_1$ in  the hooklength diagram of $\rho^{(i,1)}$  and it must appear in the first row.

If there is one hook with length $\hat{\rho}_1$ denoted by $h(\hat{\rho}_1)$, then by \cite[Prop. 1.8]{Olsson} the resulting $\beta$-set after removing the hook is
$$X^{(i,1)}=\{h_{2},\ldots,h_{i-1},h_{i}-1,h_{i+1},\ldots,h_{k},1\}.$$
The partition corresponding to $X^{(i,1)}$ is
$$\rho^{(i,1)}_1=(\rho_2-1,\rho_3-1,\ldots,\rho_{i}-2,\ldots,\rho_k-1,1).$$
Let $\hat{\rho}\backslash \hat{\rho}_1=(\hat{\rho}_2,\hat{\rho}_3,\ldots,\hat{\rho}_d)$. By the M-N rule we have
$$\chi^{\rho^{(i,1)}}(\hat{\rho})=(-1)^{\ell(h(\hat{\rho}_1))}\chi^{\rho^{(i,1)}_1}(\hat{\rho}
\backslash \hat{\rho}_1).$$

The partition that is formed by removing the first principal hook of $\rho$ is
$$\rho\backslash\hat{\rho}_1=(\rho_2-1,\rho_3-1,\ldots,\rho_{i}-1,\ldots,\rho_k-1).$$
Then $\rho^{(i,1)}_1$ can be viewed as the partition which is obtained by putting the removable box in row $i-1$ of $\rho\backslash\hat{\rho}_1$ to the end (the $k$th row). By taking transpose, we can see that $\rho^{(i,1)'}_1$ can be viewed as the partition which is obtained by putting the removable box in some row of $(\rho\backslash\hat{\rho}_1)^{'}$ to the  first row. By similar discussions as above, we have that there exists at most  one hook with length $\hat{\rho}_2$ in the hooklength diagram of $\rho^{(i,1)'}_1$ and therefore  $\rho^{(i,1)}_1$. Hence, if we remove the hooklength $\hat{\rho}_1,\hat{\rho}_2,\ldots,\hat{\rho}_d$ decreasingly from $\rho^{(i,1)}$, then in each step $i$ there exists at most one hook with length $\hat{\rho}_i$. So by the M-N rule, we have that $\chi^{\rho^{(i,1)}}(\hat{\rho})=0$ or $\pm1$.

Secondly, suppose that  $\tau\in N(\rho,1)$ is formed by putting the removable box in row $i$  of $\rho$ to the addable box $(j,\rho_j+1)$ in row $j$. By taking transpose and removing the hooklength $\hat{\rho}_1,\hat{\rho}_2,\ldots,\hat{\rho}_d$ decreasingly, the discussion can be attributed to the conditions above.
\end{proof}

\begin{corollary}\label{cor:neigb1}
 Let $\rho_m=(m,m-1,\ldots,1)$. Then for each $\tau\in N(\rho_m,1)$  we have $\chi^\tau(\hat{\rho}_m)=0$ or $\pm1$. Moreover,  $\chi^\tau(\hat{\rho}_m)=\pm1$ if and only if $\hat{\tau}=\hat{\rho}_m$.
\end{corollary}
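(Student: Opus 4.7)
The plan is to combine Proposition \ref{prp:neigb} with an induction on $m$. The first assertion, $\chi^\tau(\hat{\rho}_m)\in\{0,\pm1\}$ for each $\tau\in N(\rho_m,1)$, is immediate from Proposition \ref{prp:neigb} applied to $\rho=\rho_m$. For the equivalence, the direction $\hat{\tau}=\hat{\rho}_m\Rightarrow\chi^\tau(\hat{\rho}_m)=\pm1$ uses the standard fact (recorded in the paper just before the hooklength diagram of $\eta_5$) that $\chi^\tau(\hat{\mu})=\pm1$ whenever $\hat{\tau}=\hat{\mu}$. The substantive content is the converse, which I would prove by induction on $m$, with a trivial base case for small $m$ (for $m\le 2$ every $\tau\in N(\rho_m,1)$ satisfies $\hat{\tau}=\hat{\rho}_m$).

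For the inductive step, fix $\tau\in N(\rho_m,1)$ with $\chi^\tau(\hat{\rho}_m)\neq0$. Since $\tau$ arises by moving a single corner of $\rho_m$, a direct check gives $\tau_1,\tau_1'\in\{m-1,m,m+1\}$ with $\tau_1+\tau_1'\in\{2m-1,2m,2m+1\}$, so $h_{1,1}(\tau)\in\{2m-2,2m-1,2m\}$. Because $h_{1,1}(\tau)$ is the maximum hook length of $\tau$ and the assumption requires a hook of length $\hat{\rho}_{m,1}=2m-1$, we must have $h_{1,1}(\tau)\in\{2m-1,2m\}$. I plan to rule out $h_{1,1}(\tau)=2m$: up to conjugation this forces $\tau=\rho_m^{(i,1)}=(m+1,m-1,\ldots,m-i+2,m-i,m-i,m-i-1,\ldots,1)$ for some $2\le i\le m-1$ (remove the middle-row corner $(i,m-i+1)$ and add at $(1,m+1)$). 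A direct calculation of first-row hook lengths yields
\[
\{h_{1,j}(\tau)\}_{j=1}^{m+1}=\{2m\}\cup\bigl(\{2m-2,2m-4,\ldots,2\}\setminus\{2i\}\bigr)\cup\{2i-1,1\},
\]
none of which equals $2m-1$ (since $2i-1\le 2m-3$). By the $\beta$-set argument in Proposition \ref{prp:neigb}'s proof, any hook of length $\hat{\rho}_{m,1}$ in a $\rho^{(i,1)}$-type partition must lie in the first row, so no such hook exists in $\tau$, contradicting $\chi^\tau(\hat{\rho}_m)\neq0$.

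Hence $h_{1,1}(\tau)=2m-1=\hat{\rho}_{m,1}$, and the unique maximum hook is the principal hook itself. Applying the Murnaghan--Nakayama rule to strip it gives $\chi^\tau(\hat{\rho}_m)=\pm\chi^{\tau'}(\hat{\rho}_{m-2})$, where $\tau'=\tau\setminus h_{1,1}(\tau)$ is a partition of $\binom{m-1}{2}$. The final step is a descent claim: $\tau'\in N(\rho_{m-2},1)$, so that the corner-move on $\rho_m$ translates, after stripping the outer rim, to a corner-move on the interior $\rho_{m-2}$. Granting this, the inductive hypothesis gives $\hat{\tau'}=\hat{\rho}_{m-2}$, and concatenating with $\hat{\tau}_1=2m-1$ yields $\hat{\tau}=\hat{\rho}_m$, closing the induction.

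The main obstacle will be the descent step $\tau'\in N(\rho_{m-2},1)$. Although it is intuitively clear (the outer rim of $\rho_m$ encloses an interior isomorphic to $\rho_{m-2}$, and a corner-move on $\rho_m$ preserving $h_{1,1}=2m-1$ ought to induce a corner-move on that interior), a rigorous proof requires either a case-by-case inspection of the valid corner-moves preserving $h_{1,1}(\tau)=2m-1$ (three sub-cases by where the corner is removed, each with a few choices of where it is added) or a cleaner $\beta$-set computation tracking both the corner-move on $\rho_m$ and the subsequent principal-hook stripping simultaneously.
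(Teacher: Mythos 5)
Your proposal is correct and is essentially the paper's own argument: the $0,\pm1$ part comes from Proposition \ref{prp:neigb}, your $h_{1,1}(\tau)=2m$ case is exactly the paper's $\beta$-set computation for $\rho^{(i,1)}$ with $2\le i\le m-1$, and your induction via stripping the unique hook of length $2m-1$ is the paper's reduction ``by taking transpose and removing the length of principal hooks of $\rho_m$ decreasingly.'' The descent claim $\tau\setminus h_{1,1}\in N(\rho_{m-2},1)$ that you flag does hold (a short case check of the corner moves with $h_{1,1}(\tau)=2m-1$, or an equivalent $\beta$-set computation), so it is a routine verification rather than a genuine gap.
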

\begin{proof}
By Proposition \ref{prp:neigb}, we have that $\chi^\tau(\hat{\rho}_m)=0$ or $\pm1$ for each $\tau\in N(\rho_m,1)$.

Firstly, we consider the partition $\rho^{(i,1)}\in N(\rho_m,1)$ which is formed by putting the removable box in row $i$ ($i\geq2$) of $\rho_m$ to the addable box $(1,m+1)$ in the first row. Then the $\beta$-set of $\rho^{(i,1)}$ is
\begin{equation*}\label{eq:betai1}
\begin{split}
X^{(i,1)} & =\{x^{(i,1)}_{1},x^{(i,1)}_{2},\ldots,\}\\
&=\left\{ \begin{aligned}
&\{2m,2m-3,\ldots,2(m-i),\ldots,1\}\quad\text{if}~2\leq i\leq m-1\\
&\{2m-1,2(m-2),\ldots,4,2\}\quad\qquad\qquad\text{if}~i=m
        \end{aligned} \right..
\end{split}
\end{equation*}

Hence, if the hooklength of $2m-1$ appear in $H(\rho^{(i,1)})$, by Lemma \ref{lem:betah} we should have $i=m$. For $2\leq i\leq m-1$ since the rim hook of length $2m-1$ cannot appear in $H(\rho^{(i,1)})$, by the M-N rule we have $\chi^{\rho^{(i,1)}}(\hat{\rho}_m)=0$ where $\hat{\rho}_m=(2m-1,2m-5,\ldots)$. When $i=m$, we can see that $\hat{\rho}^{(m,1)}=\hat{\rho}_m$ and then $\chi^{\rho^{(m,1)}}(\hat{\rho}_m)=1$ or $-1$.

Generally, let $\rho^{(i,j)}\in  N(\rho_m,1)$ be the partition which is formed by putting the removable box in row $i$ of $\rho_m$ to the addable box in row $j$ where $1\leq i, j\leq m+1$.  By the M-N rule, the calculation of $\chi^{\rho^{(i,j)}}(\hat{\rho}_m)$ can be attributed to the condition in previous paragraph by taking transpose and removing the length of principal hooks of $\rho_m$ decreasingly.
\end{proof}

\begin{remark}
For general partition $\lambda$ and $\tau\in N(\lambda,1)$, if $\chi^\tau(\hat{\lambda})=\pm1$,  then $\hat{\tau}$ needn't have to equal $\hat{\lambda}$. For example $\lambda=(3,3,2)$ and $\tau=(4,2,2)$.

Suppose $\lambda=(\lambda_1,\lambda_2,\ldots,\lambda_l)\vdash n$ and $\mu=(\mu_1,\mu_2,\ldots,\mu_m)\vdash n$  are
partitions of $n$. Then $\lambda$ dominates $\mu$, written $\mu\trianglelefteq\lambda$, if $\lambda_1+\lambda_2+\cdots +\lambda_i \geq \mu_1 + \mu_2 +\cdots+\mu_i$
for all $i\geq1$ \cite[Def. 2.2.2]{sagan}. The dominate relation `$\trianglelefteq$' defines a partial order on $P(n)$ which is called the `dominance order'. For $\lambda,\mu$$\in P(n)$, we say that $\lambda$ is covered by $\mu$ (or $\mu$ covers $\lambda$), if $\lambda\trianglelefteq\mu$ and there is no  $\tau\in P(n)$ with $\lambda\trianglelefteq\tau\trianglelefteq\mu$.

Combining with \cite[Thm. 1.4.10]{JKer81} and Proposition \ref{prp:neigb}, we have that for all $\tau$ covers $\rho_m$, $\chi^{\tau}(\hat{\rho}_m)=0$ besides when $m=2k$ and move the removable box in row $k+1$ to the addable box in row $k$, and vice versa. So we can see that even two partitions are `closer' enough under the dominance order, one corresponding nonzero character value doesn't imply another.

In \cite{Ikenm}, Ikenmeyer showed that all those irreducibles which correspond to partitions that comparable to  $\rho_m$ in the dominance order satisfy Saxl's conjecture. By Corollary \ref{cor:neigb1} we have that there exist partitions $\tau$ that are comparable to  $\rho_m$ but $\chi^\tau(\hat{\rho}_m)=0$.
\end{remark}

\begin{corollary}\label{cor:only1}
Suppose that $h\in H(\rho_m)$ with $|h|=1$. Besides when $m=2k-1$ ($k\in \mathbb{N}$) and $h$ lies on row $k$ of $\rho_m$, there exists a unique $\tau\in N(\rho_m,1)$ with $\tau\neq\rho_m$ such that $\chi^{\tau}(\hat{\rho}_m)\neq0$ and $\chi^{\tau}\in\chi^{\rho_m}\otimes\chi^{\rho_m}$.
\end{corollary}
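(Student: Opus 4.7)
The plan is to combine Corollary \ref{cor:neigb1} with a direct diagonal-by-diagonal analysis of which one-box moves preserve the principal hook partition of $\rho_m$, and then to invoke Pak et al.'s character criterion.

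By Corollary \ref{cor:neigb1}, for every $\tau\in N(\rho_m,1)$ one has $\chi^\tau(\hat{\rho}_m)\in\{0,\pm1\}$, and the value is nonzero if and only if $\hat{\tau}=\hat{\rho}_m$. Hence the existence/uniqueness claim reduces to counting $\tau\in N(\rho_m,1)\setminus\{\rho_m\}$ that share the principal hook partition of $\rho_m$; the additional assertion $\chi^\tau\in\chi^{\rho_m}\otimes\chi^{\rho_m}$ is then immediate from Lemma 1.3 of \cite{PPV} because $\rho_m=\rho_m'$ is self-conjugate. I will parametrize any $\tau\neq\rho_m$ in $N(\rho_m,1)$ by a pair $(i,j)$ with $i\neq j$: remove the hook-$1$ corner at $(i,m-i+1)$ and add a new box in some admissible row $j$. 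Writing $d:=\lceil m/2\rceil$, the equality $\hat{\tau}=\hat{\rho}_m$ amounts to $d(\tau)=d$ together with $\tau_s+\tau'_s=\rho_s+\rho'_s$ for every $s=1,\ldots,d$. The one-box move alters only the four data $\rho_i,\rho_j,\rho'_{c_r},\rho'_{c_a}$, where $c_r=m+1-i$ and $c_a=m+2-j$ (with $c_a=1$ when $j=m+1$); using self-conjugacy of $\rho_m$, diagonal balance reduces to the multiset equality $\{i,c_r\}\cap\{1,\ldots,d\}=\{j,c_a\}\cap\{1,\ldots,d\}$.

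A short case analysis on $i$ then completes the proof. Since $c_r=m+1-i$ and $d=\lceil m/2\rceil$, the intersection $\{i,c_r\}\cap\{1,\ldots,d\}$ equals $\{i\}$ when $i<(m+1)/2$ and $\{c_r\}$ when $i>(m+1)/2$; only when $m=2k-1$ and $i=k$ do both elements collide at $k$, producing a doubled $-1$ at diagonal $k$ that no single-box addition can match---this is precisely the exception in the statement. Outside the exception, matching the intersections pins $j$ down uniquely: $j=m+2-i$ when $i\leq d$ and $j=m+1-i$ when $i>d$. The main obstacle I foresee is the admissibility bookkeeping: one must check that these forced values of $j$ are always legal addable rows (never equal to $i$ or $i+1$, and equal to $m+1$ only when $i<m$); that the algebraically competing solution $j=i+1$ arising when $i>d$ is always ruled out because rows $i,i+1$ of $\rho_m$ have consecutive lengths, so $\rho^{(i,i+1)}$ fails to be a partition; and that the Durfee size of $\rho^{(i,j)}$ is exactly $d$. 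All three checks follow by direct inspection from the staircase structure of $\rho_m$.
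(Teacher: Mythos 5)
Your proposal is correct and follows essentially the same route as the paper: reduce via Corollary \ref{cor:neigb1} to counting one-box moves $\tau\neq\rho_m$ with $\hat{\tau}=\hat{\rho}_m$, and then invoke Lemma 1.3 of \cite{PPV}. The only difference is that you explicitly carry out the diagonal bookkeeping (forcing $j=m+2-i$ or $j=m+1-i$ and isolating the $m=2k-1$, $i=k$ exception) that the paper's proof merely asserts, which is a welcome addition but not a different method.
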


\begin{proof}
By Proposition \ref{cor:neigb1}, $\chi^{\tau}(\hat{\rho}_m)\neq0$ if and only if
$\hat{\tau}=\hat{\rho}_m$. Besides when $m=2k-1$ and $h$ lies on row $k$ of $\rho_m$, we have that  for each removable box $a_i$ in row $i$ of $\rho_m$, there exists only one addable box denoted by $b_j$ in some row $j$ such that the partition $\tau$ obtained by moving $a_i$ to $b_j$ satisfies  $\tau\neq\rho_m$ and $\hat{\tau}=\hat{\rho}_m$. Moreover, by Lemma 1.3 of \cite{PPV} we have  $\chi^{\tau}\in\chi^{\rho_m}\otimes\chi^{\rho_m}$.
\end{proof}

In the following discussion, we assume that $m=2k$ ($k\in \mathbb{N}$). Just like Corollary \ref{cor:only1}, we will see that the uniqueness of $|h|$-neighborhood also holds on the first row and column of $\rho_m$.

Before proving next lemma, we introduce a process (called `going around' in \cite{Regev}) which forms ~$N(\rho_m,  |h_{1,j}|)$. By definition, we can see that elements in $N(\rho_m,  |h_{1,j}|)$ are formed by rim hooks of length $2(m-j)+1$ which go around the boundary of $\rho_{1,j}=\rho_m\backslash h_{1,j}$ step by step. Taking $j=2$ for example, $\rho_{1,2}=\rho_{m}\setminus h_{1,2}=(2k-2,2k-3,\ldots,2,1,1,1)\vdash k(2k+1)-(4k-3)$. Firstly, let $\rho_{1,2}^{(1)}=(2k-2+(4k-3),~2k-3,\ldots,2,1,1,1)\vdash k(2k+1)$. This is the partition formed by putting all $4k-3$ boxes on the first row of $\rho_{1,2}$. In the second step, these added  $4k-3$ boxes begin to walk along the boundary of $\rho_{1,2}$ down and left with least moving boxes such that the result is a partition. Denote this partition by $\rho_{1,2}^{(2)}$. If these connected boxes continue to walk along the boundary of $\rho_{1,2}$ down and left such that a partition is formed by moving the least number of boxes, then we can get a series of partitions  $\rho_{1,2}^{(i)}$ in each step $i=1,2,\ldots$. By definition, we know that $N(\rho_m,  |h_{1,2}|)$ is formed by the $4k-3$ boxes that walk from the first row to the end of the first column of  $\rho_{1,2}$ step by step.

 Generally, for any partition $\lambda\vdash n$ and hook $h\in H(\lambda)$, $N(\lambda, |h|)$ can be obtained by the process described above. We can order elements in $N(\lambda, |h|)$ by the number of steps. In previous paragraph, $\rho_{1,2}^{(1)}$ in the first step is called {\it the initial partition}. Usually, the choice of the initial partition and moving direction depends on what we need. Taking $m=4$ for example:
\ytableausetup{smalltableaux}
\begin{align*}
\ydiagram[*(white)]
{2,1,1,1}
*[*(yellow)]{2,1,1,1,1,1,1,1,1}
\leftarrow
\ydiagram[*(white)]
{2,1,1,1}
*[*(yellow)]{2,2,2,2,2}
&\leftarrow
\ydiagram[*(yellow)]
{2+1,1+2,1+1,1+1}
*[*(white)]{3,3,2,2}
\leftarrow
\ydiagram[*(yellow)]
{2+2,1+2,1+1}
*[*(white)]{4,3,2,1}
\rightarrow
\ydiagram[*(yellow)]
{2+3,1+2}
*[*(white)]{5,3,1,1}
\rightarrow
\ydiagram[*(yellow)]
{2+5}
*[*(white)]{7,1,1,1}\\
& \text{Figure 2: The going around process}
\end{align*}
Choosing $\rho_4$ as the initial partition, in Figure 2 above, $N(\rho_4, |h_{1,2}|)$ is depicted by the going around process. The shaded  are the moving boxes in each step. The left arrow shows the process of moving down and left, the right arrow shows the process of moving up and right.

\begin{lemma}\label{lem:khook}
Let $h_{1,j}$ ($j=2,4,\ldots,2k$) be the hooks on the first row of $\rho_m$ and  $\rho_{1,j}=\rho_m\setminus h_{1,j}$. Then
for $\tau\in N(\rho_m, |h_{1,j}|)$,  $4k-1\in H(\tau)$ if and only if $\tau=\rho_m$ or $\tau_{1,j}$ where $\tau_{1,j}$ is the partition obtained by moving all the $|h_{1,j}|$ boxes on the end of the first column of $\rho_{1,j}$.
 \end{lemma}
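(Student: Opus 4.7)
The plan is to encode the going-around process through the $\beta$-set of $\rho_{1,j}$ and read off the hooklength condition via Lemma \ref{lem:betah}. Set $r := |h_{1,j}| = 2m - 2j + 1$ and use $t := m + r$ beads, which is large enough to accommodate every partition in $N(\rho_m, r)$. A direct calculation of the first-column hooklengths of $\rho_{1,j} = (m - 2, m - 3, \ldots, j - 1, j - 1, j - 2, \ldots, 1)$ (padded with zero parts) gives
\[
X := X_{\rho_{1,j}} = \{0, 2, 4, \ldots, 4m - 2j - 2\} \,\cup\, \{1, 3, 5, \ldots, 2m - 2j - 1\} \,\cup\, \{2m - 1\};
\]
equivalently, $X$ decomposes into the dense block $\{0, 1, \ldots, r - 1\}$, an alternating run of even positions $\{r + 1, r + 3, \ldots, 4m - 2j - 2\}$, and the single isolated odd bead $2m - 1$.

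By the standard abacus description of rim-hook addition, every $\tau \in N(\rho_m, r)$ has $\beta$-set of the form $X_\tau = (X \setminus \{y\}) \cup \{y + r\}$ for some $y \in X$ with $y + r \notin X$. Since $r$ is odd, such a move flips parity, and a short inspection shows that the valid $y$ are exactly the even beads $\{0, 2, \ldots, 4m - 2j - 2\} \setminus \{2j - 2\}$ (the bead at $2j - 2$ is blocked because $2j - 2 + r = 2m - 1 \in X$) together with the odd bead $y = 2m - 1$ (each remaining odd bead $y \in \{1, 3, \ldots, 2m - 2j - 1\}$ shifts to $y + r \in \{2m - 2j + 2, \ldots, 4m - 4j\}$, a subset of the even part of $X$). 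Direct computation identifies two of these valid moves with the claimed exceptional partitions: $y = 2m - 1 \to 4m - 2j$ recovers $X_{\rho_m}$, and $y = 0 \to r$ recovers the $\beta$-set of $\tau_{1,j} = (m - 2, m - 3, \ldots, j - 1, j - 1, j - 2, \ldots, 1, 1^r)$.

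By Lemma \ref{lem:betah}, $4k - 1 = 2m - 1 \in H(\tau)$ iff there exists $x \in X_\tau$ with $x \geq 2m - 1$ and $x - (2m - 1) \notin X_\tau$. A preliminary check shows that $X$ itself admits no such witness, so every obstruction in $X_\tau$ must involve either the removed bead $y$ or the inserted bead $y + r$. This yields two obstruction types. \emph{Type I:} there is $x \in X \setminus \{y\}$ with $x = y + 2m - 1$, equivalently $y + 2m - 1 \in X$; since the largest odd element of $X$ is $2m - 1$ and the largest even element is $4m - 2j - 2$, among valid moves only $y = 0$ satisfies this (matching $y + 2m - 1 = 2m - 1$). \emph{Type II:} $x = y + r \geq 2m - 1$ with $y - 2j + 2 \notin X$; every valid even $y \geq 2j - 2$ has $y - 2j + 2$ an even number in $[0, 4m - 4j] \subseteq [0, 4m - 2j - 2]$, hence in $X$, so the only remaining valid move giving Type II is $y = 2m - 1$, for which $y - 2j + 2 = r$ falls in the unique gap between the dense block and the alternating even run of $X$.

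Combining the two types shows that precisely the moves $y = 0$ and $y = 2m - 1$ produce $\tau$ with $4k - 1 \in H(\tau)$, i.e., $\tau = \tau_{1,j}$ or $\tau = \rho_m$, as required. The main obstacle is this concluding case analysis: whether the shifts $y + 2m - 1$ and $y - 2j + 2$ land inside $X$ depends delicately on the parity of $y$ and its location relative to the three regions of $X$, and the argument must stay uniform over all $j \in \{2, 4, \ldots, 2k\}$, including the boundary case $j = 2k$ where $r = 1$ and the odd run $\{1, 3, \ldots, 2m - 2j - 1\}$ degenerates to the empty set.
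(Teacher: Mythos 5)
Your proof is correct, and it uses the same key tool as the paper (the $\beta$-set criterion of Lemma \ref{lem:betah}) but organizes the argument in a genuinely different and more uniform way. The paper fixes the going-around order, observes that a hook of length $4k-1$ can only occur in the first row or column, treats the ``moving up'' partitions via the first-column $\beta$-set and the tail hooklengths $T^i$, and then disposes of the ``moving down'' partitions by transposing; several steps there are left as ``can be directly checked''. You instead pad the $\beta$-set of $\rho_{1,j}$ to $t=m+r$ beads, describe \emph{every} element of $N(\rho_m,|h_{1,j}|)$ as a single bead move $y\mapsto y+r$, and classify completely (Types I and II) when a witness for $2m-1\in H(\tau)$ can arise, which removes the need for the transposition step and for any unverified claims; I checked the bead-set formula for $X_{\rho_{1,j}}$, the list of valid moves, the identification of $y=2m-1$ with $\rho_m$ and $y=0$ with $\tau_{1,j}$, and the two-type obstruction analysis (including the degenerate case $j=2k$, $r=1$), and all are sound. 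Two small repairs: your parenthetical expression for $\rho_{1,j}$ is off --- the part $j-1$ should occur with multiplicity three (e.g.\ for $m=6$, $j=4$ one has $\rho_{1,4}=(4,3,3,3,2,1)$, and for $m=4$, $j=2$ one has $(2,1,1,1)$), although the displayed set $X$ is the correct $\beta$-set of the correct $\rho_{1,j}$, so nothing downstream is affected; and you should cite the standard fact (e.g.\ \cite[Prop.~1.8]{Olsson}, which the paper also invokes) that adding a rim hook of length $r$ corresponds exactly to a bead move $y\mapsto y+r$ with $y\in X$, $y+r\notin X$, together with the one-line remark that $t=m+r$ beads suffice because a neighbor of $\rho_m$ has at most $m+r$ rows. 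What your route buys is a self-contained verification that covers all neighbors at once; what the paper's route buys is brevity and the reuse of the transpose symmetry of $\rho_m$.
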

 \begin{proof}
For each~$\tau\in N(\rho_m, |h_{1,j}|)$, it is not hard to see that the hooks with length $4k-1$ must appear in the first row or column. Just as the `going around process' described in Figure 2 we choose the initial partition as $\rho_m$. Then $N(\rho_m, |h_{1,j}|)$ is formed by the rim hook corresponding to  $h_{1,j}$ moves up to the first row and down to the end of  $\rho_{1,j}$. We have that if $\tau\in N(\rho_m, |h_{1,j}|)$ belongs to the moving up (down, respectively) process then  $4k-1\in H(\tau)$ if and only if $4k-1$ appear in the first row (column, respectively), that is, $H_1(\tau)$.

If $\tau\in N(\rho_m, |h_{1,j}|)$ belongs to the moving up process, then we can use Lemma \ref{lem:betah} to determine whether $4k-1$ appear or not. In fact, let $X_\tau=\{x_{\tau_1},x_{\tau_2},\ldots,x_{\tau_l}\}$ denote the $\beta$-set formed by the first column hooklength. We have that if $\tau\neq \rho_m$ then $x_{\tau_1}-(4k-1)\in X_{\tau}$, and so by Lemma \ref{lem:betah} we have $4k-1\notin H_1(\tau)$. The moving down process turns out to move up if we take transpose. Since
taking transpose does not change the hooklength for each box, the discussion of
$\tau\in N(\rho_m, |h_{1,j}|)$ that belongs to the moving down process can be attributed to the condition of moving up. In this case, we will find that $x_{\tau_1}-(4k-1)\notin X_{\tau}$ only if $\tau=\tau_{1,j}$, that is, the partition obtained by moving the hook $h_{1,j}$ down till the end of the first column. Hence we have $4k-1\in H(\tau_{1,j})$.

More precisely, we can deduce $x_{\tau_1}-(4k-1)$  as follows:
choosing the initial partition as $\rho_m$, we can order partitions in  $N(\rho_m, |h_{1,j}|)$ by the number of step. Denote $\rho_m^{(0)}=\rho_m$. The partitions corresponding to the moving up direction in step $i$ is denoted by $\rho_m^{(i)}$. The  tail column hooklength in step $i$ refers to the first column hooklengths that under the last lower left box of the moving rim hook. Let  $T^i$ denote the tail column hooklength corresponding to $\rho_{m}^{(i)}$. With notions above, we can directly check that if $\tau=\rho_{m}^{(i)}$ ($i>0$) which corresponds to the moving up process then $x_{\tau_1}-(4k-1)\in T^i\subseteq X_{\tau}$. Hence by Lemma \ref{lem:betah} we have $4k-1\notin H_{1}(\tau)$ and so $4k-1\notin H(\tau)$. For example, in Figure 2, we have $\rho_4^{(1)}=(5,3,1,1)$ and $T^1=\{2,1\}$. Let $\tau=\rho_4^{(1)}$, then $X_\tau=\{x_{\tau_1},x_{\tau_2},\ldots\}=\{8,5,2,1\}$. Then $x_{\tau_1}-(4k-1)=8-7=1\in T^1$, where $k=2$. So if we check the partitions in Figure 2 by the $\beta$-set, we have that  besides $\rho_4$ there is a unique $\tau_{1,2}\in N(\rho_4,|h_{1,2}|)$ such that $7\in H(\tau_{1,2})$, where $\tau_{1,2}=(2,1^8)$. We can see that $\tau_{1,2}$ is the leftmost partition
obtained by moving all the $|h_{1,2}|=5$ boxes on the end of the first column of $\rho_{1,2}=(2,1^3)$.
\end{proof}

Since $\rho_m$ is self-conjugate, similar result holds for hooks on the first column.
\begin{corollary}\label{cor:khookc}
Let $h_{j,1}$ ($j=2,4,\ldots,2k$) be the hooks on the first column of $\rho_m$ and  $\rho_{j,1}=\rho_m\setminus h_{j,1}$. Then
for $\tau\in N(\rho_m, |h_{j,1}|)$,  $4k-1\in H(\tau)$ if and only if $\tau=\rho_m$ or $\tau_{j,1}$ where $\tau_{j,1}$ is the partition obtained by moving all the $|h_{j,1}|$ boxes on the end of the first row of $\rho_{j,1}$.
\end{corollary}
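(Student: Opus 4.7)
The plan is to deduce this corollary from Lemma \ref{lem:khook} by exploiting the fact that $\rho_m$ is self-conjugate. Since conjugation of partitions preserves hooklengths (i.e. $|h| \in H(\lambda)$ iff $|h| \in H(\lambda')$), it suffices to establish a bijection between $N(\rho_m,|h_{j,1}|)$ and $N(\rho_m,|h_{1,j}|)$ via the map $\tau \mapsto \tau'$ that carries $\rho_m$ to $\rho_m$ and $\tau_{j,1}$ to $\tau_{1,j}$.

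The first step is to verify that $\rho_{j,1}' = \rho_{1,j}$. Because $\rho_m = \rho_m'$, the hook $h_{j,1}$ on the first column of $\rho_m$ is the transpose of the hook $h_{1,j}$ on the first row, and in particular $|h_{j,1}| = |h_{1,j}| = 4k-2j+1$. Removing the rim hook corresponding to $h_{j,1}$ from $\rho_m$ and then taking the transpose yields the same diagram as removing the rim hook corresponding to $h_{1,j}$ from $\rho_m'=\rho_m$, so $\rho_{j,1}' = \rho_{1,j}$ as claimed.

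Next, for any $\tau \in N(\rho_m,|h_{j,1}|)$, write $\tau = \rho_{j,1} * S$ for a rim hook $S$ of length $|h_{j,1}|$. Then $\tau' = \rho_{j,1}' * S' = \rho_{1,j} * S'$, where $S'$ is a rim hook of the same length in $\rho_{1,j}$. Hence $\tau' \in N(\rho_m,|h_{1,j}|)$, and this map $\tau \mapsto \tau'$ is a bijection between the two neighborhoods. Applying Lemma \ref{lem:khook} to $\tau'$, we get $4k-1 \in H(\tau')$ iff $\tau' = \rho_m$ or $\tau' = \tau_{1,j}$, where $\tau_{1,j}$ is obtained by moving all the $|h_{1,j}|$ boxes to the end of the first column of $\rho_{1,j}$.

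Finally, translate back via conjugation: $4k-1 \in H(\tau)$ iff $4k-1 \in H(\tau')$, so the equivalence becomes $\tau = \rho_m' = \rho_m$ or $\tau = \tau_{1,j}'$. It remains to check that $\tau_{1,j}'$ coincides with $\tau_{j,1}$ as defined in the statement, namely the partition obtained by moving all $|h_{j,1}|$ boxes to the end of the first row of $\rho_{j,1}$. This follows because transposing the operation "move boxes to the end of the first column of $\rho_{1,j}$" gives exactly "move boxes to the end of the first row of $\rho_{1,j}' = \rho_{j,1}$." The main subtlety—really the only nonroutine step—is the compatibility of the conjugation map with the ``going around'' construction of $N(\rho_m, |h_{j,1}|)$ and with the specific extremal partition $\tau_{j,1}$; once this is spelled out, everything reduces cleanly to Lemma \ref{lem:khook}.
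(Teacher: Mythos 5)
Your proposal is correct and follows exactly the route the paper intends: the corollary is stated as an immediate consequence of Lemma \ref{lem:khook} via self-conjugacy of $\rho_m$, and your argument simply makes explicit the conjugation bijection $\tau \mapsto \tau'$ between $N(\rho_m,|h_{j,1}|)$ and $N(\rho_m,|h_{1,j}|)$, the preservation of hooklengths, and the identities $\rho_{j,1}'=\rho_{1,j}$ and $\tau_{1,j}'=\tau_{j,1}$. No gaps; this is the same (transpose) argument, just spelled out in more detail than the paper bothers to.
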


In the following theorem, $\tau_{1,j}$ are the partitions obtained in Lemma \ref{lem:khook}. We will provide a class of nonzero Kronecker coefficients corresponding to Saxl's conjecture by Pak et al.'s main lemma.
\begin{theorem}\label{thm:rcnb}
For the hooks  $h_{1,j}$ where $j=2,4,\ldots,2k$ on the first row of $\rho_m$ with lengths $\{4k-3,4k-7,\ldots,5,1\}$, there exists a unique $\tau_{1,j}\in N(\rho_m, |h_{1,j}|)$ with $\tau_{1,j}\neq\rho_m$ such that $\chi^{\tau_{1,j}}(\hat{\rho}_m)\neq0$ and $\chi^{\tau_{1,j}}\in\chi^{\rho_m}\otimes\chi^{\rho_m}$.
\end{theorem}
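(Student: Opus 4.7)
The plan is to combine the existence statement of Corollary \ref{cor:nbkc} with the sharp restriction coming from Lemma \ref{lem:khook} and let the uniqueness drop out by squeezing from two sides.

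First, I would verify the hypotheses of Corollary \ref{cor:nbkc} for $\lambda=\rho_m$ and the hooks $h_{1,j}$. For $m=2k$ the principal hook lengths of $\rho_m$ are $\hat{\rho}_m=(4k-1,4k-5,\ldots,7,3)$, while a direct count gives $|h_{1,j}|=2(m-j)+1$, so as $j$ ranges over $2,4,\ldots,2k$ these lengths form the set $\{4k-3,4k-7,\ldots,5,1\}$. The two sets consist of odd integers in different residue classes modulo $4$, hence are disjoint, so $|h_{1,j}|$ is never a principal hook length of $\rho_m$. Since $\rho_m$ is self-conjugate one has $\chi^{\rho_m}(\hat{\rho}_m)=\pm 1\neq 0$. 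Corollary \ref{cor:nbkc} therefore guarantees the existence of some $\tau\in N(\rho_m,|h_{1,j}|)$ with $\tau\neq\rho_m$, $\chi^{\tau}(\hat{\rho}_m)\neq 0$, and $\chi^{\tau}\in\chi^{\rho_m}\otimes\chi^{\rho_m}$.

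Next, I would pin down which $\tau$ this must be. Since the first part of $\hat{\rho}_m$ is $4k-1$, applying the Murnaghan--Nakayama rule once shows that $\chi^\tau(\hat{\rho}_m)\neq 0$ forces $4k-1\in H(\tau)$. But Lemma \ref{lem:khook} is precisely the statement that the only elements of $N(\rho_m,|h_{1,j}|)$ whose hooklength diagrams contain $4k-1$ are $\rho_m$ itself and the distinguished partition $\tau_{1,j}$ obtained by sliding the full rim hook of length $|h_{1,j}|$ down to the bottom of the first column of $\rho_{1,j}=\rho_m\setminus h_{1,j}$. Combining with $\tau\neq\rho_m$, we are forced to have $\tau=\tau_{1,j}$; this single identification simultaneously supplies existence of a partition with the prescribed form and its uniqueness in $N(\rho_m,|h_{1,j}|)\setminus\{\rho_m\}$ on which the character is nonzero.

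The last sentence of the conclusion, $\chi^{\tau_{1,j}}\in\chi^{\rho_m}\otimes\chi^{\rho_m}$, is then immediate from the same invocation of Corollary \ref{cor:nbkc} (that is, Pak et al.'s main lemma). I do not expect any genuine obstacle: all the heavy lifting has already been done in Corollary \ref{cor:nbkc} and Lemma \ref{lem:khook}, and the argument reduces to noting that their intersection is a single partition. The only point worth double-checking is the disjointness of the two sets of odd numbers above, which is the computation that makes the $r\notin\{\hat{\rho}_{m,1},\hat{\rho}_{m,2},\ldots\}$ hypothesis of Corollary \ref{cor:nbkc} actually hold.
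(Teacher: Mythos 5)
Your proposal is correct and follows essentially the same route as the paper: the paper likewise uses the Murnaghan--Nakayama rule together with Lemma \ref{lem:khook} to kill the character at every element of $N(\rho_m,|h_{1,j}|)$ other than $\rho_m$ and $\tau_{1,j}$, and then gets existence and tensor-square membership from the Theorem \ref{thm:neib} argument plus Pak et al.'s lemma, which is exactly what your invocation of Corollary \ref{cor:nbkc} packages. The only cosmetic difference is that the paper re-runs the virtual-character argument to note $\chi^{\tau_{1,j}}(\hat{\rho}_m)=\pm1$, whereas you only need nonvanishing; both are adequate.
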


\begin{proof}
Since  $4k-1$ is the first part of $\hat{\rho}_m=(4k-1,4k-5,\ldots,7,3)$, by the M-N rule and Lemma \ref{lem:khook} we have $\chi^{\lambda}(\hat{\rho}_m)=0$ for all $\lambda\in N(\rho_m, |h_{1,j}|) $ besides $\rho_m$ and $\tau_{1,j}$. Moreover,  since $\chi^{\rho_m}(\hat{\rho}_m)=(-1)^{k(k-1)/2}$, from the argument of Theorem \ref{thm:neib} we know that $\chi^{\tau_{1,j}}(\hat{\rho}_m)=1$ or $-1$. Hence, by Lemma 1.3 of \cite{PPV} we have  $\chi^{\tau_{1,j}}\in \chi^{\rho_m}\otimes\chi^{\rho_m}$.
\end{proof}

\begin{remark}
In Theorem \ref{thm:rcnb}, we only discuss the condition when the hooks  $h_{1,j}$ on the first row of $\rho_m$. Similar arguments hold for $h_{j,1}$ on the first column, where $j=2,4,\ldots,2k$. If $m=2k-1$ ($k\in \mathbb{N}$), by similar discussions as Theorem \ref{thm:rcnb} we can also find a unique neighborhood for each $h\in H(\rho_m)$ with $|h|\notin \{4k-3,4k-7,\ldots,5,1\}$ on the first row and column of $\rho_m$.

Suppose that $h\in H(\rho_m)$  with hooklength $|h|$ does't belong to principal hooklengths of $\rho_m$ and $h$ does't lie on the boundary of $\rho_m$, such as the boxes with green color in Figure 1. For smaller $n$, by direct calculation we have that the result of Theorem \ref{thm:rcnb} still holds. That is, there exists only one $\tau\in N(\rho_m, |h|)$ such that $\tau\neq\rho_m$, $\chi^\tau(\hat{\rho}_m)\neq0$ and therefore $\chi^\tau(\hat{\rho}_m)=\pm1$. Generally, we have the following problem.

\begin{problem}
 Suppose that $\lambda\in P(n)$. For each $h\in H(\lambda)$ with hooklength doesn't belong to the principal hooklengths of $\lambda$, does there exist a unique partition $\nu\in N(\lambda,|h|)$ such that $\nu\neq\lambda$ and $\chi^\nu(\hat{\lambda})\neq0$?
\end{problem}
\end{remark}

\begin{remark}
 It is not hard to see that the partitions obtained in Corollary \ref{cor:only1} and Theorem  \ref{thm:rcnb} can also be detected by Ikenmeyer's criterion \cite[Thm. 2.1]{Ikenm}. Moreover, there are many irreducible characters in the neighborhood of $\rho_m$ that take zero on $\hat{\rho}_m$, which reflects that Ikenmeyer's  criterion may be more powerful than Pak et al.'s for Saxl's conjecture. The method we used here can also be applicable to other self-conjugate partitions. In next section, we will discuss the effectiveness of the character criterion for another self-conjugate partition.
\end{remark}

\section{Effectiveness of the character criterion by an example}\label{se:effectiveness}
Let $\gamma=(k+1,2,1^{k-1})$ be self-conjugate where $k\in \mathbb{N}$. Then the Durfee size of $\gamma$ is two. In this section, we will find the nonzero Kronecker coefficients of $\chi^{\gamma}\otimes \chi^{\gamma}$ explicitly. Then we have that the character criterion is less effective for large $k$. Due to the Frobenius correspondence between character and Schur function, in this section we discuss Kronecker coefficients under the expression of Schur functions.

In \cite{Lwood}, Littlewood provided the following theorem about the mixed product of ordinary and Kronecker product of Schur functions.
\begin{theorem}\cite[Thm.~3]{Lwood}\label{thm:littlew}
 Let $\alpha,~\beta$ and $\gamma$ be partitions such that $|\alpha| +|\beta| =|\gamma|$. Then,
$$(s_\alpha s_\beta)*s_\gamma =\sum_{\eta\vdash|\alpha|}\sum_{\delta\vdash|\beta|}c_{\eta,\delta}^{\gamma}(s_\eta *s_\alpha)(s_\delta * s_\beta),$$
where  $c_{\eta,\delta}^{\gamma}$ are Littlewood-Richardson coefficients.
\end{theorem}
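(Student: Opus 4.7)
The plan is to translate the identity into the representation theory of symmetric groups via the Frobenius characteristic map. Under this correspondence the ordinary product $s_\alpha s_\beta$ matches the induction product of characters from the Young subgroup $H := S_{|\alpha|}\times S_{|\beta|}$ up to $G := S_{|\gamma|}$, while the Kronecker product $*$ matches the internal (pointwise) tensor product of characters of the same group. The left-hand side therefore corresponds to the $G$-character
$$\mathrm{Ind}_H^G\!\bigl(\chi^\alpha\boxtimes\chi^\beta\bigr)\otimes \chi^\gamma,$$
where $\boxtimes$ denotes the external product of $H$-characters and $\otimes$ the internal tensor on $G$.

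The first main step is the projection formula, a standard consequence of Frobenius reciprocity: for any $H\le G$, any $H$-character $\varphi$, and any $G$-character $\psi$,
$$\mathrm{Ind}_H^G(\varphi)\otimes\psi \;=\; \mathrm{Ind}_H^G\!\bigl(\varphi\otimes \mathrm{Res}_H^G\psi\bigr).$$
Applying this to the expression above reduces the problem to computing $\mathrm{Res}_H^G \chi^\gamma$. The second step is the restriction formula, which is the adjoint of the Littlewood--Richardson rule $s_\eta s_\delta = \sum_\gamma c_{\eta,\delta}^\gamma s_\gamma$ under the Hall inner product; explicitly,
$$\mathrm{Res}_H^G(\chi^\gamma)\;=\;\sum_{\eta\vdash |\alpha|,\,\delta\vdash|\beta|} c_{\eta,\delta}^\gamma\, \chi^\eta\boxtimes\chi^\delta.$$

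Substituting this into the projection formula and distributing the internal tensor factorwise across the external one, so that
$(\chi^\alpha\boxtimes\chi^\beta)\otimes(\chi^\eta\boxtimes\chi^\delta) = (\chi^\alpha\otimes\chi^\eta)\boxtimes(\chi^\beta\otimes\chi^\delta)$, one obtains an induction from $H$ of an external product of internal tensor products. Translating back through the Frobenius characteristic --- which sends the induction of an external product to the ordinary product of the corresponding symmetric functions, and internal tensors on each factor to Kronecker products --- each summand becomes $c_{\eta,\delta}^\gamma\,(s_\eta*s_\alpha)(s_\delta*s_\beta)$, yielding the claimed identity. The only real obstacle is the bookkeeping of internal versus external tensors across induction and restriction; once the projection formula is invoked there is no combinatorial difficulty, since every combinatorial input is packaged in the Littlewood--Richardson coefficients $c_{\eta,\delta}^\gamma$.
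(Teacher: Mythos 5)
Your argument is correct. Note that the paper does not prove this statement at all --- it is quoted verbatim as Theorem 3 of Littlewood's 1956 paper --- so there is no internal proof to compare against; what you have written is the standard modern representation-theoretic proof of Littlewood's identity. The two ingredients you invoke are both sound and used correctly: the projection formula $\mathrm{Ind}_H^G(\varphi)\otimes\psi=\mathrm{Ind}_H^G(\varphi\otimes\mathrm{Res}_H^G\psi)$ for $H=S_{|\alpha|}\times S_{|\beta|}\le G=S_{|\gamma|}$, and the fact (Frobenius reciprocity applied to the Littlewood--Richardson rule) that $\mathrm{Res}_H^G\chi^\gamma=\sum_{\eta,\delta}c_{\eta,\delta}^{\gamma}\,\chi^\eta\boxtimes\chi^\delta$; combined with the factorization $(\chi^\alpha\boxtimes\chi^\beta)\otimes(\chi^\eta\boxtimes\chi^\delta)=(\chi^\alpha\otimes\chi^\eta)\boxtimes(\chi^\beta\otimes\chi^\delta)$ and the multiplicativity of the Frobenius characteristic on induction products, this yields exactly $\sum_{\eta,\delta}c_{\eta,\delta}^{\gamma}(s_\eta*s_\alpha)(s_\delta*s_\beta)$. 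Littlewood's original derivation is phrased in the classical language of S-function and character manipulations rather than the induction/restriction formalism, but it is the same argument in substance; your version has the advantage of making clear that all combinatorial content is concentrated in the Littlewood--Richardson coefficients, with the rest being formal adjunction bookkeeping.
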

\begin{remark}
Littlewood stated that `repeated application of this theorem will evaluate any inner product (Kronecker product) of S-functions'. Generally, the evaluation is complicated. But in some special cases this formula is practicable, for example \cite{Tewari}. In this section we will provide another example.
\end{remark}

For a partition $\gamma$, let $\gamma\ominus1$ (respectively, $\gamma\oplus1$ ) denote the set of partitions obtained by removing a removable box of $\gamma$ (respectively, putting an addable box of $\gamma$). By Littlewood-Richardson rule \cite{sagan} we know that if
$c_{\eta,1}^{\gamma}$ is nozero, then $c_{\eta,1}^{\gamma}=1$ and $\eta\in\gamma\ominus1$.
In Theorem \ref{thm:littlew}, if we let $\beta=1$ we have
\begin{equation}\label{eq:littlew}
\begin{split}
(s_\alpha s_1)*s_\gamma &=\sum_{\eta\vdash|\alpha|}\sum_{\delta\vdash 1}c_{\eta,\delta}^{\gamma}(s_\eta *s_\alpha)(s_\delta * s_1)=\sum_{\eta\vdash|\alpha|}c_{\eta,1}^{\gamma}(s_\eta *s_\alpha)(s_1 * s_1),\\
&=\sum_{\eta\in\gamma\ominus1}(s_\eta *s_\alpha)s_1.
\end{split}
\end{equation}

The following theorem is a special case of the Littlewood-Richardson rule which is called the {\it Pieri rule}.
\begin{theorem}\label{thm:pierule}
If $\mu$ is a partition, then
$$s_\mu s_{(n)}= \sum_{\nu\vdash |\mu|+n}s_\nu,$$
where $\nu/\mu$ is a horizontal strip of size $n$.
\end{theorem}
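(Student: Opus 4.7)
Since the paper has just invoked the Littlewood--Richardson rule, the most direct route is to obtain the Pieri rule as a specialization. Expanding $s_\mu s_{(n)}=\sum_\nu c^\nu_{\mu,(n)}\, s_\nu$, the task reduces to showing that $c^\nu_{\mu,(n)}=1$ precisely when $\nu/\mu$ is a horizontal strip of size $n$, and $c^\nu_{\mu,(n)}=0$ otherwise.

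First, I would recall that $c^\nu_{\mu,(n)}$ counts semistandard Young tableaux of skew shape $\nu/\mu$ with content $(n)$ whose reverse reading word is a lattice (Yamanouchi) word. Content $(n)$ forces every entry of such a tableau to equal $1$, and the lattice condition is then automatic because the reading word is $11\cdots 1$. The remaining semistandard requirement that entries strictly increase down columns prohibits two $1$'s in any single column; equivalently, $\nu/\mu$ can contain no two boxes in a common column, which is exactly the horizontal strip condition. Conversely, when $\nu/\mu$ is a horizontal strip of size $n$, placing a $1$ in every box produces one and only one valid LR tableau. This pins down $c^\nu_{\mu,(n)}\in\{0,1\}$ with the stated characterization, which is the assertion of the theorem.

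As a backup route, I would use the Hall inner product: from $s_{(n)}=h_n$ and the adjunction $\langle s_\mu h_n, s_\nu\rangle=\langle h_n, s_{\nu/\mu}\rangle$, together with the duality between the bases $\{h_\lambda\}$ and $\{m_\lambda\}$, the coefficient of $s_\nu$ in $s_\mu s_{(n)}$ equals the coefficient of $m_{(n)}$ in $s_{\nu/\mu}$, namely the number of SSYT of shape $\nu/\mu$ and content $(n)$; this is again $1$ when $\nu/\mu$ is a horizontal strip and $0$ otherwise. There is no genuine obstacle in the argument: the result is classical, and the only delicate point is the unpacking of the combinatorial definitions to verify that both the strict-column and Yamanouchi constraints collapse to the single horizontal strip condition when the multiplier is a one-row shape.
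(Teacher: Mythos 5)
Your argument is correct, and it follows the same route the paper takes: the paper simply records the Pieri rule as a special case of the Littlewood--Richardson rule without further proof, and your main argument is exactly that specialization, carried out in detail by noting that content $(n)$ forces all entries to be $1$, the lattice condition becomes vacuous, and column-strictness reduces to the horizontal strip condition. Your backup via $s_{(n)}=h_n$ and Hall-inner-product duality is also fine but not needed.
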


In Theorem \ref{thm:pierule}, if we let $n=1$ we have
\begin{equation}\label{eq:pie1}
 s_\mu s_{1}= \sum_{\nu\in \mu\oplus1}s_\nu.
\end{equation}
The Kronecker product also satisfies the useful identities
\begin{equation}\label{eq:krkp}
s_\alpha*s_\beta= s_\beta * s_\alpha~\text{and}~s_{\alpha\prime} * s_{\beta\prime} =s_\alpha*s_\beta.
\end{equation}

Let $\gamma=(k+1,2,1^{k-1})\vdash 2k+2$ be self-conjugate.  We will discuss the decomposition of $s_\gamma* s_\gamma$. In the following discussion, the notation of the following partitions are fixed: $\tau=(k+2,1^k)$, $\alpha=(k+1,1^k)$, $\eta=(k+1,2,1^{k-2})$, $\tilde{\alpha}=(k+2,1^{k-1})$ and $\alpha^-=(k+1,1^{k-1})$.

In the following proposition, by (\ref{eq:littlew}) we get an expression of $ s_\gamma* s_\gamma$ in a relative `simpler' forms under the mixed product of hook partitions and $s_1$. We will use it to evaluate $ s_\gamma*s_\gamma$.
\begin{proposition}\label{prop:srsr}
 For $\gamma$, $\tau$, $\alpha$, $\eta$, $\tilde{\alpha}$ and $\alpha^-$ defined above, we have
\begin{equation}\label{eq:srsr}
\begin{split}
  s_\gamma* s_\gamma= & 2(s_{\alpha^-}* s_{\alpha^-})s_1s_1+2(s_{\alpha^{-\prime}}* s_{\alpha^-})s_1s_1-3(s_{\alpha}* s_{\alpha})s_1-4(s_{\tilde{\alpha}}* s_{\alpha})s_1\\
    &+2s_\tau*s_\tau+2s_{\tau\prime}*s_\tau,
\end{split}
\end{equation}
where $\alpha^{-\prime}$, $\tau^\prime$ are conjugate partitions of  $\alpha^{-}$ and $\tau$.
\end{proposition}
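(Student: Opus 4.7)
The plan is to bootstrap the mixed-product identity (\ref{eq:littlew}) by twice rewriting $s_\gamma$ as a signed combination of products that each carry a factor of $s_1$. Three Pieri expansions (\ref{eq:pie1}), obtained by reading off addable boxes, will drive the computation:
\begin{equation*}
s_\alpha s_1 = s_\tau + s_\gamma + s_{\alpha^+}, \qquad s_{\alpha^-} s_1 = s_{\tilde\alpha} + s_\eta + s_\alpha, \qquad s_{\alpha^{-\prime}} s_1 = s_\alpha + s_{\eta'} + s_{\tilde\alpha'},
\end{equation*}
where I set $\alpha^+ := (k+1,1^{k+1})$. The matching corner decompositions are $\gamma\ominus 1=\{\eta',\alpha,\eta\}$, $\tau\ominus 1=\{\alpha,\tilde\alpha\}$ and $\alpha\ominus 1=\{\alpha^{-\prime},\alpha^-\}$. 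Two conjugacy observations do most of the work: $\gamma=\gamma'$ (hypothesis) and $\alpha=\alpha'$ (read directly from the diagram of $(k+1,1^k)$), together with $(\alpha^+)'=\tau$.

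The first reduction step solves the first Pieri identity for $s_\gamma = s_\alpha s_1 - s_\tau - s_{\alpha^+}$ and Kronecker-multiplies both sides by $s_\gamma$. Using (\ref{eq:krkp}) with $\gamma=\gamma'$ and $(\alpha^+)'=\tau$ collapses $s_{\alpha^+}*s_\gamma$ to $s_\tau * s_\gamma$, so
\begin{equation*}
s_\gamma * s_\gamma = (s_\alpha s_1)*s_\gamma \,-\, 2\, s_\tau * s_\gamma.
\end{equation*}
Applying (\ref{eq:littlew}) against $\gamma\ominus 1$ expands the first term as $\bigl(s_{\eta'}*s_\alpha + s_\alpha*s_\alpha + s_\eta*s_\alpha\bigr)s_1$. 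For the second term I replay the same trick: substituting $s_\gamma = s_\alpha s_1 - s_\tau - s_{\alpha^+}$ once more and using $s_\tau*s_{\alpha^+} = s_{\tau'}*s_\tau$ via (\ref{eq:krkp}), and then invoking (\ref{eq:littlew}) against $\tau\ominus 1$, I get $s_\tau * s_\gamma = (s_\alpha*s_\alpha + s_{\tilde\alpha}*s_\alpha)s_1 - s_\tau*s_\tau - s_{\tau'}*s_\tau$.

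What remains is to rewrite $(s_{\eta'}*s_\alpha)s_1$ and $(s_\eta*s_\alpha)s_1$. Substituting the second and third Pieri identities for $s_\eta$ and $s_{\eta'}$ and applying (\ref{eq:littlew}) to $(s_{\alpha^-}s_1)*s_\alpha$ and $(s_{\alpha^{-\prime}}s_1)*s_\alpha$ against $\alpha\ominus 1$ produces the four terms $s_{\alpha^-}*s_{\alpha^-}$, $s_{\alpha^{-\prime}}*s_{\alpha^-}$, $s_{\alpha^-}*s_{\alpha^{-\prime}}$ and $s_{\alpha^{-\prime}}*s_{\alpha^{-\prime}}$; the last becomes $s_{\alpha^-}*s_{\alpha^-}$ by (\ref{eq:krkp}). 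The stray term $s_{\tilde\alpha'}*s_\alpha$ coming from the third Pieri expansion is converted into $s_{\tilde\alpha}*s_\alpha$ by combining (\ref{eq:krkp}) with $\alpha=\alpha'$. After these simplifications the two expansions for $(s_\eta*s_\alpha)s_1$ and $(s_{\eta'}*s_\alpha)s_1$ become identical in form, and adding everything yields (\ref{eq:srsr}).

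The only real difficulty is bookkeeping: the coefficient $-3$ of $(s_\alpha*s_\alpha)s_1$ accumulates as $+1-2-1-1$ and the $-4$ of $(s_{\tilde\alpha}*s_\alpha)s_1$ as $-2-1-1$, with the contributions coming respectively from $(s_\alpha s_1)*s_\gamma$, from $-2\,s_\tau*s_\gamma$, and from the two secondary substitutions for $s_\eta$ and $s_{\eta'}$. The delicate point is deploying $\gamma=\gamma'$, $\alpha=\alpha'$ and $(\alpha^+)'=\tau$ at precisely the right moments so that every `primed' Kronecker product pairs with its unprimed counterpart; a misstep here breaks the symmetry between $(s_\eta*s_\alpha)s_1$ and $(s_{\eta'}*s_\alpha)s_1$ and the coefficients on the right-hand side of (\ref{eq:srsr}) will not come out correctly.
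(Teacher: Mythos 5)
Your proposal is correct and follows essentially the same route as the paper: apply Littlewood's identity (\ref{eq:littlew}) to $(s_\alpha s_1)*s_\gamma$, $(s_{\alpha^-}s_1)*s_\alpha$ and $(s_\alpha s_1)*s_\tau$, use the Pieri rule together with $\gamma=\gamma'$, $\alpha=\alpha'$ and $(\alpha^+)'=\tau$ via (\ref{eq:krkp}), and combine; your coefficient bookkeeping ($+1-2-1-1=-3$ and $-2-1-1=-4$) matches the paper's computation. The only cosmetic difference is that you expand $s_{\eta'}*s_\alpha$ through a third Pieri identity for $s_{\alpha^{-\prime}}s_1$, whereas the paper collapses it immediately to $s_\eta*s_\alpha$ using $\alpha=\alpha'$.
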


\begin{proof}
By (\ref{eq:littlew}) we have
$(s_\alpha s_1)*s_\gamma=\sum_{\beta\in\gamma\ominus1}(s_\alpha*s_\beta)s_1$.
Since $\gamma$ and $\alpha$ are self-conjugate, by (\ref{eq:pie1}) and (\ref{eq:krkp}) we have
\begin{align*}
 (s_\alpha s_1)*s_\gamma &=(s_\tau+s_\gamma+s_{\tau\prime})*s_\gamma
=(s_\tau+s_{\tau\prime})*s_\gamma+s_\gamma*s_\gamma\\
&=2s_\tau*s_\gamma+s_\gamma*s_\gamma
\end{align*}
and
\begin{align*}
\sum_{\beta\in\gamma\ominus1}(s_\alpha*s_\beta)s_1
&=(s_\alpha*s_\eta+s_\alpha*s_\alpha+s_\alpha*s_{\eta\prime})s_1\\
&=(2s_\alpha*s_\eta+s_\alpha*s_\alpha)s_1.
\end{align*}
Hence, we have
\begin{equation}\label{eq:srsrgg}
s_\gamma*s_\gamma=(2s_\alpha*s_\eta+s_\alpha*s_\alpha)s_1- 2s_\tau*s_\gamma.
\end{equation}

Similarly, we have
$(s_{\alpha^-}s_1)*s_\alpha=\sum_{\beta\in\alpha\ominus1}
(s_{\alpha^{-}}*s_\beta)s_1$ and so
$$(s_{\tilde{\alpha}}+s_\alpha+s_\eta)*s_\alpha
=(s_{\alpha^{-}}*s_{\alpha^{-}})s_1+(s_{\alpha^{-}\prime}*s_{\alpha^{-}})s_1.$$
Hence,
\begin{equation}\label{eq:srsrea}
s_\eta*s_\alpha=(s_{\alpha^{-}}*s_{\alpha^{-}})s_1+(s_{\alpha^{-}\prime}*s_{\alpha^{-}})s_1-
s_{\tilde{\alpha}}*s_\alpha-s_\alpha*s_\alpha.
\end{equation}

Since $(s_{\alpha}s_1)*s_\tau=\sum_{\beta\in\tau\ominus1}
(s_{\alpha}*s_\beta)s_1$, for $s_\tau*s_\gamma$ we have
\begin{equation}\label{eq:srsrgt}
s_\tau*s_\gamma=(s_{\alpha}*s_{\alpha})s_1+(s_{\tilde{\alpha}}*s_{\alpha})s_1-
s_{\tau}*s_\tau-s_{\tau\prime}*s_\tau.
\end{equation}

Combining (\ref{eq:srsrgg}), (\ref{eq:srsrea}) and (\ref{eq:srsrgt}), we get (\ref{eq:srsr}).
\end{proof}

\begin{remark}
Generally, the method in the proof of Proposition \ref{prop:srsr} can be used to evaluate $ s_\lambda*s_\mu$ with principal hook partition $\hat{\lambda}=\hat{\mu}=(k,1)$.
\end{remark}

For each $\lambda\in P(2k+2)$, denote the multiplicity of $s_\lambda$ in $(s_{\alpha^{-}}*s_{\alpha^{-}})s_1s_1$,  $(s_{\alpha^{-\prime}}*s_{\alpha^{-}})s_1s_1$,
$(s_{\alpha}*s_{\alpha})s_1$ and
$(s_{\tilde{\alpha}}*s_{\alpha})s_1$
by $g^{(2)}(\alpha^{-},\alpha^{-},\lambda)$, $g^{(2)}(\alpha^{-\prime},\alpha^{-},\lambda)$,
$g^{(1)}(\alpha,\alpha,\lambda)$ and
$g^{(1)}(\tilde{\alpha},\alpha,\lambda)$, respectively.
Then we have

\begin{equation*}
g^{(1)}(\alpha,\alpha,\lambda)=\sum_{\sigma\in \lambda\ominus1}g(\alpha,\alpha,\sigma) \quad and\quad g^{(2)}(\alpha^{-},\alpha^{-},\lambda)=\sum_{\sigma\in \lambda\ominus1} \sum_{\nu\in\sigma\ominus1}g(\alpha^{-},\alpha^{-},\nu).
\end{equation*}

Similar results hold for $g^{(2)}(\alpha^{-\prime},\alpha^{-},\lambda)$
and
$g^{(1)}(\tilde{\alpha},\alpha,\lambda)$.
Then by (\ref{eq:srsr}) we have
\begin{equation}\label{eq:gmgmlb}
\begin{split}
g(\gamma,\gamma,\lambda)= & 2g^{(2)}(\alpha^{-},\alpha^{-},\lambda)+2g^{(2)}(\alpha^{-\prime},\alpha^{-},\lambda)
-3g^{(1)}(\alpha,\alpha,\lambda)-4g^{(1)}(\tilde{\alpha},\alpha,\lambda)\\
    &+2g(\tau,\tau,\lambda)+2g(\tau',\tau,\lambda)\\
=&2\sum_{\sigma\in \lambda\ominus1}\sum_{\nu\in\sigma\ominus1}g(\alpha^{-},\alpha^{-},\nu)
+2\sum_{\sigma\in\lambda\ominus1}
\sum_{\nu\in\sigma\ominus1}g(\alpha^{-\prime},\alpha^{-},\nu)-3\sum_{\sigma\in \lambda\ominus1} g(\alpha,\alpha,\sigma)\\
&-4\sum_{\sigma\in \lambda\ominus1} g(\tilde{\alpha},\alpha,\sigma)+2g(\tau,\tau,\lambda)+2g(\tau',\tau,\lambda)\\
=&\sum_{\sigma\in \lambda\ominus1}\left(\sum_{\nu\in\sigma\ominus1}2\left(g(\alpha^{-},\alpha^{-},\nu)+ g(\alpha^{-\prime},\alpha^{-},\nu)\right)-3 g(\alpha,\alpha,\sigma)-4g(\tilde{\alpha},\alpha,\sigma)\right)\\
&+2g(\tau,\tau,\lambda)+2g(\tau',\tau,\lambda).
\end{split}
\end{equation}

In \cite{rosas}, Rosas discussed the decomposition of Kronecker product of Schur functions indexed by hook shapes. By Theorem 3 of  \cite{rosas} for $\tau=(k+2,1^k)$, $\alpha=(k+1,1^k)$, $\tilde{\alpha}=(k+2,1^{k-1})$ we have the following Table \ref{tab:kchk}.

\begin{table}[h]
 \begin{tabular}{|c|c|c|c|c|}\hline
   & $g(\tau,\tau,\lambda)$, & $g(\tau,\tau',\lambda)$ &  $g(\alpha,\alpha,\lambda)$, &  $g(\tilde{\alpha},\alpha,\lambda)$ \\\hline
  $\lambda=(a,1^b),~b>0$, $a=1$         & 0 & 1 & 1 & 0 \\\hline
 $\lambda=(a,1^b),~b>0$, $a>1$         & 1  & 1 & 1 & 1 \\\hline
  $\lambda=(a,1^b),~b=0$             & 1 & 0 & 1 & 0  \\\hline
$ \lambda=(a,b,2^c,1^d),~d=0,~a=b$ & 1 & 1 & 2 & 0 \\\hline
 $\lambda=(a,b,2^c,1^d),~d>0,~a=b$ & 1 & 2 & 2 & 1 \\\hline
   $\lambda=(a,b,2^c,1^d),~d=0,~a>b$ & 2 & 1 &2  &1  \\\hline
  $\lambda=(a,b,2^c,1^d),~d>0,~a>b$ & 2 & 2 & 2 & 2 \\\hline
  Others & 0 & 0 & 0 & 0 \\\hline
\end{tabular}
\caption{Kronecker coefficients for some hook partitions}
\label{tab:kchk}
\end{table}
\begin{remark}
 Since $|\tau|=2k+2$ and $|\alpha|=|\tilde{\alpha}|=2k+1$, in first column of Table \ref{tab:kchk} we just describe the shape of $\lambda$. So we have $\lambda\in P(2k+2)$ in column 2 and 3, in column 4 and 5 we have $\lambda\in P(2k+1)$.
In column 2 and 3, if we replace $k$ by $k-1$, then $\tau$ and $\tau'$ become $\alpha^-=(k+1,1^{k-1})$ and $\alpha^{-\prime}=(k,1^{k})$, and the results still hold.
\end{remark}

%
%
%


\begin{proposition}\label{prop:srsrds1}
For $\gamma=(k+1,2,1^{k-1})\vdash 2k+2$, suppose $s_\gamma*s_\gamma=\sum_{\lambda\in P(2k+2)}g(\gamma,\gamma,\lambda)s_\lambda$. If $\lambda\in DS(1,2k+2)$, then $g(\gamma,\gamma,\lambda)\neq0$. Moreover, suppose $\lambda=(a,1^b)\vdash 2k+2$ with $a>b$. Then we have
\begin{equation*}
\begin{split}
g(\gamma,\gamma,\lambda)&=\left\{\begin{aligned}
&1\qquad\text{if}\quad b=0\\
&2\qquad\text{if}\quad b=1\\
&4\qquad\text{if}\quad b=2\\
&6\qquad\text{if}\quad b>2
        \end{aligned} \right..
\end{split}
\end{equation*}
\end{proposition}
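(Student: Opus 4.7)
The plan is to evaluate $g(\gamma,\gamma,\lambda)$ directly from identity (\ref{eq:gmgmlb}), whose right-hand side is expressed entirely in terms of six Kronecker coefficients on hook shapes: $g(\alpha,\alpha,\sigma)$, $g(\tilde{\alpha},\alpha,\sigma)$, $g(\alpha^-,\alpha^-,\nu)$, $g(\alpha^{-\prime},\alpha^-,\nu)$, $g(\tau,\tau,\lambda)$ and $g(\tau,\tau',\lambda)$. All six are supplied by Table~\ref{tab:kchk}, using the remark after the table to read off the $\alpha^-$ and $\alpha^{-\prime}$ entries via the substitution $k\mapsto k-1$ in columns~2 and~3.

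I would first dispose of the positivity statement by a conjugation argument: since $\gamma=\gamma'$ we have $g(\gamma,\gamma,\lambda)=g(\gamma,\gamma,\lambda')$, and a self-conjugate hook $(a,1^b)$ has odd size $2b+1$, which is incompatible with $|\lambda|=2k+2$. Hence every hook in $DS(1,2k+2)$ may be relabelled so that $a>b$, and the strict positivity of the explicit values $1,2,4,6$ gives the first claim at once.

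The remainder is a case analysis for $\lambda=(a,1^b)$ with $a>b$. The removal sets are $\lambda\ominus 1=\{(a-1)\}$ when $b=0$ and $\{(a-1,1^b),(a,1^{b-1})\}$ when $b\geq 1$; each $\sigma\in\lambda\ominus 1$ satisfies $|\sigma\ominus 1|\leq 2$, and every partition appearing in the chain $\lambda\rightsquigarrow\sigma\rightsquigarrow\nu$ is itself a hook, so Table~\ref{tab:kchk} applies directly. For each regime $b\in\{0,1,2\}$ and $b>2$ I would list the relevant chains, read off the six coefficients, and substitute into (\ref{eq:gmgmlb}). As a sanity check, for $b=0$ the chain is $(2k+2)\rightsquigarrow(2k+1)\rightsquigarrow(2k)$ (each falling under row~3 of Table~\ref{tab:kchk}), and (\ref{eq:gmgmlb}) collapses to $[2(1+0)-3(1)-4(0)]+2(1)+2(0)=1$; the other regimes are handled analogously.

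The main obstacle is the bookkeeping between rows of Table~\ref{tab:kchk}: depending on $b$, some intermediate hooks degenerate into single-row shapes $(a)$ (row~3, giving the asymmetric pair $1,0$) rather than genuine two-armed hooks (row~2, pair $1,1$). This is precisely what produces the non-uniform values $1,2,4,6$ across the four regimes, and the stabilisation at $b>2$ occurs because every hook appearing in the double sum $\sum_{\sigma}\sum_{\nu}$ then lies in row~2. No more exotic edge cases arise: the parity inequality $a\geq b+2$ forced by $a+b=2k+2$ together with $a>b$ rules out the column-hook entry $(1^{b+1})$ of row~1.
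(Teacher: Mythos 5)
Your proposal follows essentially the same route as the paper's proof: evaluate $g(\gamma,\gamma,\lambda)$ for hooks directly from (\ref{eq:gmgmlb}) with the six coefficients read off Table~\ref{tab:kchk} (using the remark for $\alpha^-,\alpha^{-\prime}$), and run the case analysis over $b$; your $b=0$ sample computation and the stabilisation explanation for $b>2$ match what the paper does in its worked case $b=1$. The only cosmetic differences are that you compute $b=0$ via the formula instead of citing $g(\gamma,\gamma,(n))=1$ as well known, and you fold the conjugation argument for hooks with $a\le b$ into the proof, which the paper records in the remark immediately after the proposition.
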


\begin{proof}
If $b=0$, it is well-known that $g(\gamma,\gamma,\lambda)=1$.
By Table \ref{tab:kchk} and (\ref{eq:gmgmlb}), $g(\gamma,\gamma,\lambda)$ can be directly evaluated for $b=1,2,\ldots$. Take $b=1$ for example.

If $b=1$, then $\lambda=(2k+1,1)$ and $\lambda\ominus1=\{(2k+1), (2k,1)\}$. Denote $\sigma_1=(2k+1)$ and $\sigma_2=(2k,1)$. Then $\sigma_1\ominus1=\{(2k)\}$ and $\sigma_2\ominus1=\{(2k-1,1),(2k)\}$.

By Table \ref{tab:kchk},
we have the following: $g(\alpha^{-},\alpha^{-},(2k))=1$,
$g(\alpha^{-\prime},\alpha^{-},(2k))=0$; $g(\alpha^{-},\alpha^{-},(2k-1,1))=g(\alpha^{-\prime},\alpha^{-},(2k-1,1))=1$;
$g(\alpha,\alpha,\sigma_1)=1, g(\tilde{\alpha},\alpha,\sigma_1)=0$;
$g(\alpha,\alpha,\sigma_2)=1, g(\tilde{\alpha},\alpha,\sigma_2)=1$;
$g(\tau,\tau,\lambda)=1, g(\tau',\tau,\lambda)=1$.

Hence by (\ref{eq:gmgmlb}) we have
\begin{equation*}
\begin{split}
g(\gamma,\gamma,\lambda)
=&2(g(\alpha^{-},\alpha^{-},(2k))+g(\alpha^{-\prime},\alpha^{-},(2k)))-3 g(\alpha,\alpha,\sigma_1)-4g(\tilde{\alpha},\alpha,\sigma_1)\\
&+2(g(\alpha^{-},\alpha^{-},(2k-1,1))+ g(\alpha^{-\prime},\alpha^{-},(2k-1,1)))\\
&+2(g(\alpha^{-},\alpha^{-},(2k))+ g(\alpha^{-\prime},\alpha^{-},(2k)))
-3g(\alpha,\alpha,\sigma_2)
-4g(\tilde{\alpha},\alpha,\sigma_2)\\
&+2g(\tau,\tau,\lambda)+2g(\tau',\tau,\lambda)\\
=&2
\end{split}
\end{equation*}

If $b=2$ (or $b>2$), similar arguments can be used to get $g(\gamma,\gamma,\lambda)=4$ (or 6).
\end{proof}

Since $g(\gamma,\gamma,\lambda)=g(\gamma,\gamma,\lambda')$, by Proposition \ref{prop:srsrds1} we have that $g(\gamma,\gamma,\lambda)>0$ for all $\lambda\in DS(1,2k+2)$.

\begin{theorem}\label{thm:srsrds2}
For $\gamma=(k+1,2,1^{k-1})\vdash 2k+2$, suppose $s_\gamma*s_\gamma=\sum_{\lambda\in P(2k+2)}g(\gamma,\gamma,\lambda)s_\lambda$. If $\lambda\in DS(2,2k+2)$, then $g(\gamma,\gamma,\lambda)>0$.
\end{theorem}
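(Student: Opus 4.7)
The plan is to follow exactly the same strategy as in Proposition \ref{prop:srsrds1}: I parametrize $\lambda\in DS(2,2k+2)$ as
$\lambda=(a,b,2^{c},1^{d})$ with $a\geq b\geq 2$, $c,d\geq 0$, and $a+b+2c+d=2k+2$, and I expand $g(\gamma,\gamma,\lambda)$ using formula (\ref{eq:gmgmlb}). The only new ingredient compared to Proposition \ref{prop:srsrds1} is that now I must consult the Durfee-size-$2$ rows of Table \ref{tab:kchk} in addition to the hook rows.

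First, I would enumerate $\lambda\ominus 1$ by listing the removable corners of $\lambda$: one gets between one and four partitions in $\lambda\ominus 1$, depending on whether $a>b$, whether $b>2$ or $b=2$ with $c=0$, whether $c>0$, and whether $d>0$. Each $\sigma\in\lambda\ominus 1$ is a partition of $2k+1$ that is again either a hook or of the shape $(a',b',2^{c'},1^{d'})$ covered by Table \ref{tab:kchk}; any $\sigma$ outside this list contributes $0$ to (\ref{eq:gmgmlb}) and may be ignored. I repeat the enumeration once more to obtain $\sigma\ominus 1$ and locate the relevant $\nu\in P(2k)$, again either hook-shaped or Durfee-$2$. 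Then for each surviving $\sigma$ and $\nu$ I read off the six values $g(\alpha^-,\alpha^-,\nu)$, $g(\alpha^{-\prime},\alpha^-,\nu)$, $g(\alpha,\alpha,\sigma)$, $g(\tilde\alpha,\alpha,\sigma)$, $g(\tau,\tau,\lambda)$, $g(\tau',\tau,\lambda)$ from Table \ref{tab:kchk}, splitting into sub-cases according to the dichotomies $a=b$ vs.\ $a>b$ and $d=0$ vs.\ $d>0$ (as in the table). Plugging these into (\ref{eq:gmgmlb}) yields an explicit small positive integer for $g(\gamma,\gamma,\lambda)$ in each sub-case, from which the theorem follows. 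The conjugation symmetry $g(\gamma,\gamma,\lambda)=g(\gamma,\gamma,\lambda')$ (since $\gamma=\gamma'$) lets me normalize, say, $c\geq d$ and so halve the number of shapes to treat.

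The main obstacle is the proliferation of boundary sub-cases. The piecewise entries of Table \ref{tab:kchk} drop by $1$ at the boundaries $a=b$ or $d=0$, and these drops cascade through all three levels $\lambda$, $\sigma$, $\nu$ of the nested sum. The tightest situations are the ones where several of these boundary drops coincide, for example $\lambda=(2^{c+2},1^d)$ with $d=0$, or $\lambda=(a,b,1^d)$ with $c=0$, $a=b$, since there the negative contributions $-3 g(\alpha,\alpha,\sigma)-4 g(\tilde\alpha,\alpha,\sigma)$ are largest relative to the positive contributions. The delicate part is to carefully bookkeep these shrinkages and verify that the positive mass from the inner double sum over $\sigma$ and $\nu$, together with the $2g(\tau,\tau,\lambda)+2g(\tau',\tau,\lambda)$ term, always dominates; once all sub-cases are written out, the result is an analog of the piecewise formula in Proposition \ref{prop:srsrds1} showing that $g(\gamma,\gamma,\lambda)$ equals an explicit small positive integer in every case of $\lambda\in DS(2,2k+2)$.
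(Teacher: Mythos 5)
Your plan is essentially the paper's own proof: it likewise expands $g(\gamma,\gamma,\lambda)$ via (\ref{eq:gmgmlb}) and reads the required hook and Durfee-size-2 Kronecker values off Table \ref{tab:kchk}, the only difference being that the paper organizes the case analysis by $\hat{\lambda}_2\in\{1,2,\geq3\}$ and, rather than evaluating every sub-case exactly, merely lower-bounds the inner sum $f(\sigma)=\sum_{\nu\in\sigma\ominus1}2\left(g(\alpha^{-},\alpha^{-},\nu)+g(\alpha^{-\prime},\alpha^{-},\nu)\right)-3g(\alpha,\alpha,\sigma)-4g(\tilde{\alpha},\alpha,\sigma)\geq0$ and lets the strictly positive terms $2g(\tau,\tau,\lambda)+2g(\tau',\tau,\lambda)$ force positivity, which keeps the bookkeeping you worry about manageable (with $k\leq2$ handled by direct computation). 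One small caveat: the proposed normalization $c\geq d$ is not justified by conjugation, since $(a,b,2^{c},1^{d})'=(c+d+2,c+2,2^{b-2},1^{a-b})$ does not swap $c$ and $d$, so that symmetry reduction would have to be replaced by a different one (or dropped, as in the paper).
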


\begin{proof}
If $k=1$ and $2$, the computation (see also \cite{Lwood} for $k=2$) is directly by (\ref{eq:gmgmlb}). In the following discussion, we assume $k\geq3$.

If $\lambda\in DS(2,2k+2)$, we have $\hat{\lambda}=(\hat{\lambda}_1,\hat{\lambda}_2)$ with $\hat{\lambda}_2\geq1$. Next, we will show $g(\gamma,\gamma,\lambda)>0$ under conditions: (1) $\hat{\lambda}_2\geq3$; (2) $\hat{\lambda}_2=2$; (3) $\hat{\lambda}_2=1$. Let $$f(\sigma)=\sum_{\nu\in\sigma\ominus1}2\left(g(\alpha^{-},\alpha^{-},\nu)+ g(\alpha^{-\prime},\alpha^{-},\nu)\right)-3 g(\alpha,\alpha,\sigma)-4g(\tilde{\alpha},\alpha,\sigma).$$
Then
$g(\gamma,\gamma,\lambda)=\sum_{\sigma\in \lambda\ominus1}f(\sigma)
+2g(\tau,\tau,\lambda)+2g(\tau',\tau,\lambda)$ by (\ref{eq:gmgmlb}).
We will show that  $f(\sigma)\geq0$ under conditions (1), (2), (3) above. Since
$g(\tau,\tau,\lambda)$, $g(\tau',\tau,\lambda)>$0, we have $g(\gamma,\gamma,\lambda)>0$.

(1) Suppose that $\hat{\lambda}_2\geq3$. Then no partitions with Durfee size 1 appear in $\lambda\ominus1$ and $\sigma\ominus1$ where $\sigma\in\lambda\ominus1$. So we have $g(\alpha,\alpha,\sigma)=2$ and  $g(\tilde{\alpha},\alpha,\sigma)=1$ or $2$ by Table \ref{tab:kchk} (since $|\sigma|$ is odd, $g(\tilde{\alpha},\alpha,\sigma)\neq0$).

If  $g(\tilde{\alpha},\alpha,\sigma)=1$, then we have $\sigma$ or $\sigma'=(a,b,2^c,1^d)$ with $d=0$ and $a>b$. Hence $|\sigma\ominus1|\geq2$.
For each $\nu\in\sigma\ominus1$, the last row of $\nu$ has a removable box. So there exists $\nu_0\in \sigma\ominus1$ with the form $\nu_0=(x,y,2^s,1^t)$ where $x>y$ and $t>0$. Hence $g(\alpha^{-},\alpha^{-},\nu_0)=g(\alpha^{-'},\alpha^{-},\nu_0)=2$.  Since $g(\alpha^{-},\alpha^{-},\nu)$, $g(\alpha^{-\prime},\alpha^{-},\nu)\geq1$ for $\nu\in \sigma\ominus1$ and $|\sigma\ominus1|\geq2$, we have the following estimation
\begin{align*}
f(\sigma)=&\sum_{\nu\in\sigma\ominus1}2\left(g(\alpha^{-},\alpha^{-},\nu)+ g(\alpha^{-\prime},\alpha^{-},\nu)\right)-3 g(\alpha,\alpha,\sigma)-4g(\tilde{\alpha},\alpha,\sigma)\\
=&\sum_{\overset{\nu\in\sigma\ominus1}{\nu\neq\nu_0}}2\left(g(\alpha^{-},\alpha^{-},\nu)+ g(\alpha^{-\prime},\alpha^{-},\nu)\right)+2\left(g(\alpha^{-},\alpha^{-},\nu_0)+ g(\alpha^{-\prime},\alpha^{-},\nu_0)\right)-3\times2-4\times1\\
\geq&2(1+1)+2(2+2)-6-4\\
=&2>0.
\end{align*}

If  $g(\tilde{\alpha},\alpha,\sigma)=2$, then we have $\sigma$ or $\sigma'=(a,b,2^c,1^d)$ with $d>0$ and $a>b$. Hence $|\sigma\ominus1|\geq3$.
So there exists $\nu_0\in \sigma\ominus1$ with the form $\nu_0=(x,y,2^s,1^t)$ where $x>y$ and $t>0$. Hence $g(\alpha^{-},\alpha^{-},\nu_0)=2$ and $g(\alpha^{-\prime},\alpha^{-},\nu_0)=2$. Since  $g(\alpha^{-},\alpha^{-},\nu)$, $g(\alpha^{-\prime},\alpha^{-},\nu)\geq1$ for $\nu\in \sigma\ominus1$ and $|\sigma\ominus1|\geq3$, we have the following estimation
\begin{align*}
f(\sigma)=&\sum_{\nu\in\sigma\ominus1}2\left(g(\alpha^{-},\alpha^{-},\nu)+ g(\alpha^{-\prime},\alpha^{-},\nu)\right)-3 g(\alpha,\alpha,\sigma)-4g(\tilde{\alpha},\alpha,\sigma)\\
=&\sum_{\overset{\nu\in\sigma\ominus1}{\nu\neq\nu_0}}2\left(g(\alpha^{-},\alpha^{-},\nu)+ g(\alpha^{-\prime},\alpha^{-},\nu)\right)+2\left(g(\alpha^{-},\alpha^{-},\nu_0)+ g(\alpha^{-\prime},\alpha^{-},\nu_0)\right)-3\times2-4\times2\\
\geq&2(2+2)+2(2+2)-6-8\\
=&2>0.
\end{align*}

(2) Suppose that $\hat{\lambda}_2=2$. Then no partitions with Durfee size 1 appear in $\lambda\ominus1$ and there exists one partition with Durfee size 1 (i.e. hook partition) in $\sigma\ominus1$ for $\sigma\in\lambda\ominus1$.
So we have $g(\alpha,\alpha,\sigma)=2$ and  $g(\tilde{\alpha},\alpha,\sigma)=1$ or $2$ by Table \ref{tab:kchk} (since $|\sigma|$ is odd, $g(\tilde{\alpha},\alpha,\sigma)\neq0$).

If  $g(\tilde{\alpha},\alpha,\sigma)=1$, then we have either $\sigma$ or $\sigma'=(a,b,2^c,1^d)$ with $d=0$ and $a>b$. Hence $|\sigma\ominus1|\geq2$.
We only consider the case when $\sigma$ or $\sigma'=(a,2)$ with $a>2$. Otherwise,
$\sigma\ominus1$ contains no hook partitions and the discussion can be attributed to condition (1). Since $k\geq3$, we have $a\geq5$. Then
 there exists $\nu_0\in \sigma\ominus1$ with the form $\nu_0=(a-1,2)$. Hence $g(\alpha^{-},\alpha^{-},\nu_0)=2$ and we have the following estimation
\begin{align*}
f(\sigma)=&\sum_{\nu\in\sigma\ominus1}2\left(g(\alpha^{-},\alpha^{-},\nu)+ g(\alpha^{-\prime},\alpha^{-},\nu)\right)-3 g(\alpha,\alpha,\sigma)-4g(\tilde{\alpha},\alpha,\sigma)\\
=&\sum_{\overset{\nu\in\sigma\ominus1}{\nu\neq\nu_0}}2\left(g(\alpha^{-},\alpha^{-},\nu)+ g(\alpha^{-\prime},\alpha^{-},\nu)\right)+2\left(g(\alpha^{-},\alpha^{-},\nu_0)+ g(\alpha^{-\prime},\alpha^{-},\nu_0)\right)-3\times2-4\times1\\
\geq&2(1+1)+2(2+1)-6-4=0.
\end{align*}

If  $g(\tilde{\alpha},\alpha,\sigma)=2$, then we only discuss when $\sigma$ or $\sigma'=(a,2,1^d)$ with $d>0$ and $a>2$. Otherwise, we have $\sigma$ or $\sigma'=(a,3,1^d)$ and $\sigma\ominus1$ contains no hook. The discussion can also be attributed to condition (1).
Hence $|\sigma\ominus1|\geq3$.
For each $\nu\in\sigma\ominus1$, the first and last row of $\nu$ have removable boxes. So there exist $\tau_0$, $\nu_0\in \sigma\ominus1$ with the form $\tau_0=(a,2,1^b)$ where $a>2$ and $b\geq0$, $\nu_0=(c,2,1^d)$ where $c\geq2$ and $d>0$. Hence $g(\alpha^{-},\alpha^{-},\tau_0)=2$ and $g(\alpha^{-\prime},\alpha^{-},\nu_0)=2$. So we have the following estimation
\begin{align*}
f(\sigma)=&\sum_{\nu\in\sigma\ominus1}2\left(g(\alpha^{-},\alpha^{-},\nu)+ g(\alpha^{-\prime},\alpha^{-},\nu)\right)-3 g(\alpha,\alpha,\sigma)-4g(\tilde{\alpha},\alpha,\sigma)\\
=&\sum_{\overset{\nu\in\sigma\ominus1}{\nu\neq\tau_0,\nu_0}}2\left(g(\alpha^{-},\alpha^{-},\nu)+ g(\alpha^{-\prime},\alpha^{-},\nu)\right)+2\left(g(\alpha^{-},\alpha^{-},\tau_0)+ g(\alpha^{-\prime},\alpha^{-},\tau_0)\right)\\
&+2\left(g(\alpha^{-},\alpha^{-},\nu_0)+ g(\alpha^{-\prime},\alpha^{-},\nu_0)\right)
-3\times2-4\times2\\
\geq&2(1+1)+2(2+1)+2(1+2)-6-8\\
=&2>0.
\end{align*}

(3) Suppose that $\hat{\lambda}_2=1$. Then $\lambda=(a,2,1^b)$. $\lambda\ominus1$ and $\sigma\ominus1$ contain partitions of Durfee size 1, where $\sigma\in\lambda\ominus1$.

Firstly, assume that $a>2$ and $b>0$. Then $|\lambda\ominus1|=3$ and there exists only one hook partition in $\lambda\ominus1$ with the form $\sigma_0=(a,1^{b+1})$ and $\sigma_0\ominus1=\{(a-1,1^{b+1}), (a,1^b)\}$. By direct computation we have $f(\sigma_0)=1$. All the other two partitions in $\lambda\ominus1$ have Durfee size 2, the estimation of $f(\sigma)$ can be attributed to condition (2). Secondly, assume that $a\geq2$ and $b=0$. Since $k\geq3$, we have $a\geq6$, $|\lambda\ominus1|=2$, $|\sigma\ominus1|=2$ for $\sigma\in\lambda\ominus1$. By (\ref{eq:gmgmlb}) it is not hard to get $g(\gamma,\gamma,\lambda)=5$. For conditions when $a=2$ and $b\geq0$, similar discussions can be used by taking transpose to $\lambda$.
\end{proof}

\begin{proposition}
For $\gamma=(k+1,2,1^{k-1})\vdash 2k+2$, let $s_\gamma*s_\gamma=\sum_{\lambda\in P(2k+2)}g(\gamma,\gamma,\lambda)s_\lambda$. Suppose that  $d(\lambda)=3$ and let $\hat{\lambda}=(\hat{\lambda}_1,\hat{\lambda}_2,\hat{\lambda}_3)$. Then $g(\gamma,\gamma,\lambda)>0$ if and only if $\hat{\lambda}_3=1$ or $2$. Moreover, if $\hat{\lambda}_3\geq3$ or  $d(\lambda)\geq4$ then $g(\gamma,\gamma,\lambda)=0$.
\end{proposition}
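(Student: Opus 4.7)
The plan is to exploit (\ref{eq:gmgmlb}) together with the observation that $\tau, \alpha, \tilde{\alpha}, \alpha^-$ are all hook partitions, so by Rosas' theorem (encoded in Table \ref{tab:kchk}) each pairwise Kronecker coefficient appearing in (\ref{eq:gmgmlb}) vanishes whenever its third partition has Durfee size at least $3$. Consequently, for $d(\lambda) \geq 3$ we have $2g(\tau, \tau, \lambda) + 2g(\tau', \tau, \lambda) = 0$, reducing (\ref{eq:gmgmlb}) to $g(\gamma, \gamma, \lambda) = \sum_{\sigma \in \lambda \ominus 1} f(\sigma)$, where $f$ is the inner expression defined in the proof of Theorem \ref{thm:srsrds2}.

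For the vanishing part, I would use the elementary fact that removing a single box reduces the Durfee size by $1$ only if the diagonal corner $(d, d)$ is itself removable, i.e., only if $|h_{d,d}| = 1$. If $d(\lambda) \geq 4$, then every $\sigma \in \lambda \ominus 1$ has $d(\sigma) \geq 3$ (killing the $-3$ and $-4$ terms of $f$); and even when $d(\sigma) = d(\lambda) - 1$, one checks that $\sigma_{d(\lambda)-1} \geq d(\lambda) > d(\lambda) - 1$, so the diagonal corner of $\sigma$ is not removable and $d(\nu) \geq 3$ for all $\nu \in \sigma \ominus 1$, killing the $+2$ terms as well. Hence $f(\sigma) \equiv 0$ and $g(\gamma,\gamma,\lambda) = 0$. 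The case $d(\lambda) = 3$ with $\hat{\lambda}_3 \geq 3$ is parallel: $|h_{3,3}^\sigma| \geq |h_{3,3}^\lambda| - 1 \geq 2$, so $(3, 3)$ is never a corner of any $\sigma \in \lambda \ominus 1$ and $d(\nu) = 3$ throughout.

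For the positivity at $\hat{\lambda}_3 = 2$, after possibly conjugating $\lambda$ I may assume $\lambda_3 = 4$, $\lambda'_3 = 3$, so that $(3, 4)$ is a removable corner. Removing it produces a $\sigma$ in which $(3, 3)$ is now a corner, and one further removal yields $\nu$ of Durfee $2$ and shape $(\lambda_1, \lambda_2, 2^{c+1}, 1^d)$ with $\lambda_1 \geq \lambda_2 \geq 4$. Applying the Remark following Table \ref{tab:kchk} (with $k \to k-1$) gives $g(\alpha^-, \alpha^-, \nu) + g(\alpha^{-\prime}, \alpha^-, \nu) \geq 2$, hence $f(\sigma) \geq 4$, while every other $\sigma' \in \lambda \ominus 1$ has $d(\sigma') = 3$ and therefore $f(\sigma') \geq 0$. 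So $g(\gamma, \gamma, \lambda) \geq 4 > 0$.

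The delicate case is $\hat{\lambda}_3 = 1$, where $\sigma_0 := \lambda \setminus \{(3, 3)\}$ has $d(\sigma_0) = 2$ and the negative contributions $-3 g(\alpha, \alpha, \sigma_0) - 4 g(\tilde{\alpha}, \alpha, \sigma_0)$ no longer vanish trivially. The key observation is that $\sigma_{0, 2} = \lambda_2 \geq \lambda_3 = 3$ and $\sigma'_{0, 2} = \lambda'_2 \geq 3$, giving $\hat{\sigma}_{0, 2} \geq 3$; moreover $\sigma_0$ is automatically of the form $(a, b, 2^{c'}, 1^d)$ required by Table \ref{tab:kchk}. This places $\sigma_0$ precisely within the scope of case (1) of the proof of Theorem \ref{thm:srsrds2}, which directly gives $f(\sigma_0) \geq 2$; oddness of $|\sigma_0| = 2k+1$ excludes the $d = 0, a = b$ subcase and keeps $g(\tilde{\alpha}, \alpha, \sigma_0) \in \{1, 2\}$ as required there. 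Every other $\sigma \in \lambda \ominus 1$ has $d(\sigma) = 3$ and hence $f(\sigma) \geq 0$, so the total is at least $2 > 0$. The main technical obstacle, beyond the Durfee-size bookkeeping, is precisely matching $\sigma_0$ and the $\nu$'s against the correct rows of Table \ref{tab:kchk}; the reduction to case (1) of Theorem \ref{thm:srsrds2} is what lets me avoid repeating the detailed casework already carried out in Section \ref{se:effectiveness}.
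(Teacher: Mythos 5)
Your proposal is correct and takes essentially the same route as the paper: it works from (\ref{eq:gmgmlb}) and Table \ref{tab:kchk}, kills the $g(\tau,\tau,\lambda)$ terms and the negative terms of $f(\sigma)$ by Durfee-size bookkeeping for the vanishing cases, and for $\hat{\lambda}_3=1$ bounds $f(\sigma_0)$ by reducing to case (1) of the proof of Theorem \ref{thm:srsrds2}, exactly as the paper does. The only difference is one of detail: you spell out the $\hat{\lambda}_3=2$ case and the corner-removability check for $d(\lambda)\geq 4$ explicitly, where the paper simply appeals to ``similar arguments.''
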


\begin{proof}
Suppose $d(\lambda)=3$ where $\hat{\lambda}=(\hat{\lambda}_1,\hat{\lambda}_2,\hat{\lambda}_3)$. If $\hat{\lambda}_3\geq3$, then there exist no partitions with Durfee size less than 3 in $\lambda\ominus1$ and $\sigma\ominus1$ where $\sigma\in \lambda\ominus1$. So we have $g(\gamma,\gamma,\lambda)=0$ by Table \ref{tab:kchk} and (\ref{eq:gmgmlb}). Similar result holds for $\lambda$ when $d(\lambda)\geq4$.

Assume that $\hat{\lambda}_3=1$. Then there exists only one partition in $\lambda\ominus1$ with Durfee size 2 denoted by $\sigma_0$. In (\ref{eq:gmgmlb}),  the only possible negative part is
$$f(\sigma_0)=\sum_{\nu\in\sigma_0\ominus1}2\left(g(\alpha^{-},\alpha^{-},\nu)+ g(\alpha^{-\prime},\alpha^{-},\nu)\right)-3 g(\alpha,\alpha,\sigma_0)-4g(\tilde{\alpha},\alpha,\sigma_0).$$
Since $\sigma_0=(\hat{\lambda}_1,\hat{\lambda}_2)$ and $\hat{\lambda}_2\geq3$,
by the proof of condition (1) in Theorem \ref{thm:srsrds2}, we can obtain $f(\sigma_0)>0$ and so $g(\gamma,\gamma,\lambda)>0$.


Similar arguments can be used for the discussion of $\hat{\lambda}_3=2$.
\end{proof}

The following theorem illustrates the effectiveness of character criterion in \cite[Lem. 1.3]{PPV} under the example we have discussed.
\begin{theorem}
Let $\gamma_1=(k+1,1^k)$, $\gamma_2=(k+1,2,1^{k-1})$ be self-conjugate. Suppose that $A_i=\{\lambda\in P(n)|g(\gamma_i,\gamma_i,\lambda)>0\}$ and $B_i=\{\lambda\in P(n) |\chi^{\lambda}(\hat{\gamma}_i)\neq0\}$ ($i=1,2$). Then $\frac{|B_i|}{|A_i|}\to 0$ as $k\to \infty$.
\end{theorem}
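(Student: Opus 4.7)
The plan is to bound $|B_i|$ from above by a linear function of $k$ and $|A_i|$ from below by a polynomial of strictly higher degree, so the ratio collapses by pure growth-rate comparison.

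First I would pin down $B_i$ exactly via Murnaghan-Nakayama. For $i=1$, $\hat{\gamma}_1=(2k+1)$ is a single $n$-cycle with $n=2k+1$, so the classical fact that $\chi^\lambda_{(n)}$ is nonzero exactly when $\lambda$ is a hook (and then equals $\pm 1$) gives $|B_1|=2k+1$ immediately. For $i=2$, $\hat{\gamma}_2=(2k+1,1)$, and peeling off the cycle of length $2k+1$ via the M-N rule reduces $\chi^\lambda_{\hat{\gamma}_2}$ to $\sum_h (-1)^{\ell(h)}$ over rim hooks $h$ of length $2k+1$ in $\lambda\vdash 2k+2$; since the complement has size $1$, we must have $\lambda\setminus h=(1)$, so $h=\lambda\setminus\{(1,1)\}$. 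I would then check that this skew shape is a genuine rim hook iff $\lambda$ is $(2k+2)$, $(1^{2k+2})$, or of the form $(\lambda_1,2,1^d)$ with $\lambda_1\geq 2$: the no-$2\times 2$ condition forces $\lambda_2\leq 2$ and $\lambda_3\leq 1$, while the connectedness condition rules out the generic hook $(a,1^b)$ with $a,b\geq 1$ (there the cells $(1,2)$ and $(2,1)$ have no path through $\lambda\setminus\{(1,1)\}$). Each admissible $\lambda$ carries exactly one such rim hook, so the signed sum cannot cancel, and $|B_2|=2+(2k-1)=2k+1$.

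Next I would lower bound $|A_i|$ using material already in the paper. For $i=2$, Proposition \ref{prop:srsrds1} and Theorem \ref{thm:srsrds2} directly give $g(\gamma_2,\gamma_2,\lambda)>0$ whenever $d(\lambda)\leq 2$, so $A_2\supseteq DS(1,2k+2)\cup DS(2,2k+2)$. For $i=1$, Rosas's values recorded in Table \ref{tab:kchk} show that the $g(\alpha,\alpha,\lambda)$ column is strictly positive precisely on the hooks and on the shapes $(a,b,2^c,1^d)$ with $a\geq b\geq 2$, which is exactly $DS(1,n)\cup DS(2,n)$ for $n=2k+1$. In both cases $|A_i|\geq |DS(2,|\gamma_i|)|$. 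The asymptotics come from the generating function $\sum_n |DS(2,n)|\,q^n = q^4/\bigl((1-q)(1-q^2)\bigr)^2$, which has a pole of order $4$ at $q=1$ and therefore yields $|DS(2,n)|=\Theta(n^3)$. Hence $|A_i|=\Omega(k^3)$ while $|B_i|=O(k)$, so $|B_i|/|A_i|=O(1/k^2)\to 0$.

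The delicate point I expect to be the main obstacle is the characterization of $B_2$: one has to eliminate \emph{all} hook partitions except the row and the column (because in every $(a,1^b)$ with $a,b\geq 1$ the candidate rim hook $\lambda\setminus\{(1,1)\}$ is disconnected), and then confirm that no accidental sign cancellation arises in the M-N sum. Uniqueness of the length-$(2k+1)$ rim hook in each surviving shape takes care of the second point automatically, after which the polynomial-versus-linear comparison closes both cases simultaneously.
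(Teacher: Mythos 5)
Your proposal is correct and follows essentially the same route as the paper: identify $B_1$ as the hooks and $B_2$ as $\{(n),(1^{n}),(a,2,1^{n-a-2})\}$ (so $|B_i|=O(k)$), show $A_i$ contains all Durfee-size-$2$ partitions via Table \ref{tab:kchk} for $\gamma_1$ and Theorem \ref{thm:srsrds2} for $\gamma_2$, and conclude by growth-rate comparison. The only differences are cosmetic: you derive the $B_2$ characterization explicitly from the M-N rule (the paper just asserts it) and bound $|DS(2,n)|$ by a generating-function asymptotic $\Theta(n^3)$, whereas the paper uses an elementary explicit quadratic lower bound, which already suffices.
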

\begin{proof}
Denote $n_1=2k+1$ and $n_2=2k+2$. By Table \ref{tab:kchk} and Theorem \ref{thm:srsrds2}, we know that $A_i$ contains $DS(2,n_i)$. Next, we find a lower bound for $|DS(2,n_1)|$. The estimation of $|DS(2,n_2)|$ is similar. For $\lambda\in DS(2,n_1)$ with $\hat{\lambda}=(\hat{\lambda}_1,\hat{\lambda}_2)$, we have
$\frac{n_1+1}{2}\leq\hat{\lambda}_1\leq n_1-1$. Moreover, if $\hat{\lambda}_1=j$ with $\frac{n_1+1}{2}\leq j\leq n_1-1$, then $\lambda_1$ can be chosen as $2$, $3,\ldots,j-1$. Hence, for each $\lambda_1=j$ there exist at least $j-2$ partitions with Durfee size 2. So we have
$$|DS(2,n_1)|\geq\sum_{(n_1+1)/2\leq j\leq n_1-1}(j-2)=\frac{(3n_1-9)(n_1-1)}{8}.$$

It is well known that $\chi^{\lambda}(\hat{\gamma}_1)\neq0$ if and only if $\lambda\in DS(1, n_1)$ (i.e. $\lambda$ is a hook partition). Hence we have $|B_1|=n_1$ and
 $$\frac{|B_1|}{|A_1|}\leq \frac{|B_1|}{|DS(2,n_1)|}\leq \frac{8n_1}{(3n_1-9)(n_1-1)},$$
so we have  $\frac{|B_1|}{|A_1|}\to 0$ as $k\to \infty$.

Since $\chi^{\lambda}(\hat{\gamma}_2)\neq0$ if and only if $\lambda\in\{(n_2),(1^{n_2}),(a,2,1^{n_2-a-2})|2\leq a\leq n_2-2\}$, we have $|B_2|=n_2-1$. By similar estimation of $|DS(2,n_2)|$, we also have   $\frac{|B_2|}{|A_2|}\to 0$ as $k\to \infty$.
\end{proof}


\bibliographystyle{amsplain}

\end{document}